\newtheorem{theorem}{\sc{Theorem:}}[section]
\newtheorem{definition}[theorem]{\sc{Definition:}}
\newtheorem{lemma}[theorem]{\sc{Lemma:}}
\newtheorem{proposition}[theorem]{\sc{Proposition:}}
\titlespacing{\section}{0pt}{2pc}{1pc}
\theoremstyle{remark}
\newtheorem{remark}[theorem]{\sc{Remark:}}
\renewenvironment{abstract}{\textbf{\abstractname:}}{\vspace*{\baselineskip}}
\begin{document}

\title{\sc{\textbf{\large THE ATTENUATED MAGNETIC RAY TRANSFORM ON SURFACES}}}
\date{}
\author{\small GARETH AINSWORTH}
\maketitle
\begin{abstract} It has been shown in \cite{paternain111} that on a simple, compact Riemannian 2-manifold the attenuated geodesic ray transform, with attenuation given by a connection and Higgs field, is injective on functions and 1-forms modulo the natural obstruction. Furthermore, the scattering relation determines the connection and Higgs field modulo a gauge transformation. We extend the results obtained therein to the case of magnetic geodesics. In addition, we provide an application to tensor tomography in the magnetic setting, along the lines of \cite{paternain211}.
\end{abstract}

\section{Introduction}
\subsection{\normalsize Magnetic Flows}
\indent \indent Let \((M,g)\) be a compact oriented Riemannian manifold with boundary. Consider the function \(H:TM\rightarrow\mathbb{R}\) given by 
\[H(x,v):=\frac{1}{2}g(v,v),\ \ \ (x,v)\in TM.\]
The geodesic flow on \(TM\) is given by the Hamiltonian flow of the above function with reference to the symplectic structure \(\omega_{0}\) on \(TM\) provided by the pullback, via the metric, of the canonical symplectic form on \(T^{*}M.\) The abstract formulation of a magnetic field imposed on \(M\) is specified by a closed 2-form \(\Omega\). We say that the \textit{magnetic flow}, or twisted geodesic flow, is defined as the Hamiltonian flow of \(H\) under the symplectic form \(\omega\), where \[\omega:=\omega_{0}+\pi^{*}\Omega,\] and \(\pi:TM\rightarrow M\) is the usual projection. Magnetic flows were first studied in \cite{anosov67, arnold61}, and more recently in relation to inverse problems in \cite{dairbekov107, herreros10, herreros10a}.\\
\indent We may alternatively think of the magnetic field as being determined by the unique bundle map \(Y:TM\rightarrow TM,\) defined via, \[\Omega_{x}(\xi,\eta)=g(Y_{x}(\xi),\eta),\ \ \ \forall x\in M,\ \forall \xi,\eta\in T_{x}M.\] Note that this implies that \(Y\) will be skew-symmetric.
The advantage of this point of view is that it provides a nice description of the generator of the magnetic flow, indeed, one can show that this vector field at \((x,v)\in TM\) is given by \[X(x,v)+Y^{i}_{k}(x)v^{i}\frac{\partial}{\partial v^{k}}.\] Here note that the coefficients of \(Y\) are given by \(Y(\frac{\partial}{\partial x^{j}})=Y^{j}_{i}\frac{\partial}{\partial x^{i}}\), and \(X\) denotes the geodesic vector field. Furthermore, the magnetic geodesics, that is the projection of the integral curves of the above vector field to \(M\), are precisely the solutions \(t\mapsto (\gamma(t),\dot{\gamma}(t))\) to the following equation: \[\nabla_{\dot{\gamma}} \dot{\gamma}=Y(\dot{\gamma}).\]
\indent Magnetic geodesics preserve \(H\), and thus have constant speed. In what follows we will restrict ourselves to working on the unit tangent bundle \(SM:=H^{-1}(\frac{1}{2}).\) This is not a genuine restriction from a dynamical point of view, since other energy levels may be understood by simply changing \(\Omega\) to \(\lambda \Omega\), where \(\lambda\in \mathbb{R}.\)

\subsection{\normalsize Connections and Higgs Fields}
\indent \indent Ray transforms may be studied from the perspective of the relevant transport equation. That is, the attenuated ray transform of a given function \(f\in C^{\infty}(M)\), with attenuation \(a\in C^{\infty}(M)\), may be defined as the solution of the boundary value problem:
\[Xu+au=-f\ \text{in}\ SM,\ \ u|_{\partial_{-}(SM)}=0\]
where \(u:SM\rightarrow\mathbb{C}\),\ \(\partial_{\pm}(SM):=\left\{(x,v)\in SM:x\in\partial M,\ \pm\left\langle v,\nu\right\rangle \leq 0\right\}\), and \(\nu\) is the outward unit normal. In the above equation the vector field \(X\) is the generator of the geodesic flow, and is the correct operator to consider if one wishes to define the geodesic ray transform. For the purposes of this paper we will work with the generator of the magnetic flow: \(X(x,v)+Y^{i}_{k}(x)v^{i}\frac{\partial}{\partial v^{k}}\). It is a routine exercise to show that for \(2\)-manifolds this vector field in fact simplifies to: \(X+\lambda V,\) where \(\lambda\in C^{\infty}(M)\) is the unique function satisfying: \(\Omega=\lambda dV_{g}\) for \(dV_{g}\) the area form of \(M.\) Furthermore, throughout this paper we will follow \cite{paternain111} in considering a generalization of the above setup to systems of equations. Therefore, for us the attenuation is given by a unitary connection \(A:TM\rightarrow\mathfrak{u}(n)\) on the trivial bundle \(M\times\mathbb{C}^{n}\). This is a smooth mapping into \(\mathfrak{u}(n)\), the skew-Hermitian complex \(n\times n\) matrices, which for fixed \(x\in M\) is linear in \(v\in T_{x}M\). Note that as usual a connection induces a covariant derivative which acts on sections of \(M\times\mathbb{C}^{n}\) by \(d_{A}:=d+A.\) In what follows the ray transform will also be attenuated by a \textit{Higgs field}, which for our purposes is defined as a smooth map: \(\Phi:M\rightarrow\mathfrak{u}(n).\) Pairs \((A,\Phi)\) often arise in physical contexts such as Yang-Mills theory, see \cite{dunajski10}. Collecting the above ideas, the transport equation to be considered becomes: 
\[(X+\lambda V)u+\mathcal{A}u=-f\ \ \text{in}\ SM,\ \ u|_{\partial_{-}(SM)}=0,\]
where \(u,f:SM\rightarrow \mathbb{C}^{n},\) and \(\mathcal{A}:=A + \Phi.\) On any fixed magnetic geodesic this equation is a linear system of ODEs with zero initial condition, and thus has a unique solution which we will denote by: \(u=u^{f}.\)
\begin{definition} The magnetic ray transform of \(f\in C^{\infty}(SM,\mathbb{C}^{n})\) with attenuation determined by \(\mathcal{A}:=A + \Phi\) is given by
\[I_{\mathcal{A}}f:=I_{A,\Phi}f:=u^{f}|_{\partial_{+}(SM)}.\]\label{defntransform}
\end{definition}

\subsection{\normalsize Summary of the Results}
\indent Assume that \((M,g)\) is a compact, oriented Riemannian \(n\)-manifold with boundary. \(SM\) will denote the unit tangent bundle, a compact \((2n-1)\)-manifold with boundary: \(\partial(SM)=\left\{(x,v)\in SM:\ x\in\partial M\right\}.\) We denote the outer unit normal by \(\nu\), and delineate two important subsets by: \[\partial_{\pm}(SM):=\left\{   (x,v)\in SM:\ x\in\partial M,\ \pm\left\langle v,\nu\right\rangle\leq 0\right\}.\]  We denote by \(\tau(x,v)\geq 0\) the time when the magnetic geodesic determined by \((x,v)\in SM\) exits \(M\). The manifold is said to be \textit{nontrapping} if \(\tau(x,v)\) is always finite. The boundary of \(M\) is said to be \textit{strictly magnetic convex} if \[\Pi(x,v)>\left\langle Y_{x}(v),\nu(x)\right\rangle,\ \ (x,v)\in S(\partial M)\] where \(\Pi\) is the second fundamental form of \(\partial M.\) Fixing \(x \in M\) we define the magnetic exponential map at \(x\) to be the partial map \(\text{exp}_{x}^{\mu}:T_{x}M\rightarrow M\) defined by: \[\text{exp}_{x}^{\mu}(tv):=\pi\circ\varphi_{t}(v),\ \ t\geq 0,\ v\in S_{x}M.\] One can show, see \cite{dairbekov107} that \(\text{exp}_{x}^{\mu}\) is a \(C^{1}\)-partial map on \(T_{x}M\) which is \(C^{\infty}\) on \(T_{x}M\backslash\left\{0\right\}.\) We say that a compact Riemannian manifold \((M,g)\) with a magnetic flow arising from the closed \(2\)-form \(\Omega\) on \(M\) is \textit{simple} if \(\partial M\) is strictly magnetic convex, and the magnetic exponential map \(\text{exp}_{x}^{\mu}:(\text{exp}_{x}^{\mu})^{-1}(M)\rightarrow M\) is a \(C^{1}\)-diffeomorphism \(\forall x\in M.\) (Compare this to the definition of a simple Riemannian manifold, with no magnetic field present in \cite{michel80}, which was motivated by the boundary rigidity problem.) These hypotheses imply that \(M\) is diffeomorphic to the unit ball in \(\mathbb{R}^{n}\), in particular, it is topologically trivial. This allows one to work with a primitive of \(\Omega\), that is a \(1\)-form \(\alpha\) such that \(d\alpha=\Omega.\) We shall say that a triple \((M,g,\alpha)\) is a \textit{simple magnetic system} if it satisfies the above hypotheses. For the rest of the paper we will work with simple magnetic systems. The first main result is the following:

\begin{theorem} Let \((M,g,\alpha)\) be a simple magnetic system with \(\operatorname{dim}(M)=2.\) Assume that \(f:SM\rightarrow\mathbb{C}^{n}\) is a smooth function such that \(f=F+\sigma\), where \(F:M\rightarrow\mathbb{C}^{n}\) is a smooth function, and \(\sigma\) is a \(\mathbb{C}^{n}\)-valued 1-form. Let \(A:SM\rightarrow\mathfrak{u}(n)\) be a unitary connection, and \(\Phi:M\rightarrow\mathfrak{u}(n)\) a skew-Hermitian matrix function. If \(I_{A,\Phi}(f)=0\), then \(F=\Phi p\) and \(\sigma=d_{A}p\), where \(p:M\rightarrow\mathbb{C}^{n}\) is smooth function with \(p|_{\partial(M)}=0.\)\label{mainresult}
\end{theorem}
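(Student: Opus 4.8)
The plan is to adapt the energy-identity (Pestov/Guillarmou-style) approach used in the geodesic case of \cite{paternain111} to the magnetic setting. The key structural fact is that in dimension two the generator of the magnetic flow is $X+\lambda V$, where $V$ is the vertical vector field (generator of the $\mathbb{S}^1$-fibre rotation) and $\lambda\in C^\infty(M)$ is the scalar determined by $\Omega=\lambda\,dV_g$. I would begin by setting up the Fourier decomposition of the solution $u$ of the transport equation along the fibres of $SM$: writing $u=\sum_{k\in\mathbb{Z}}u_k$, where $Vu_k=iku_k$, and introducing the raising and lowering operators $\eta_+,\eta_-$ defined through the splitting $X=\eta_++\eta_-$ (with $\eta_\pm$ mapping degree-$k$ components to degree $k\pm1$). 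The attenuation $\mathcal A=A+\Phi$ also decomposes: the connection $A$, being linear in $v$, contributes components of degree $\pm1$, while the Higgs field $\Phi$ contributes a degree-$0$ component.

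The heart of the argument is a \emph{holomorphic integrating factor}. Since $I_{A,\Phi}f=0$ means $u|_{\partial_+(SM)}=0$ and $u|_{\partial_-(SM)}=0$, the function $u$ vanishes on the whole of $\partial(SM)$. I would first establish that on a simple magnetic system one can construct a matrix integrating factor $R:SM\to GL(n,\mathbb{C})$ satisfying $(X+\lambda V)R+\mathcal A R=0$ with $R$ having a specified (finite) degree of holomorphicity in the fibre variable — this is where the simplicity hypothesis and topological triviality (existence of the primitive $\alpha$) are used, via an analogue of the scalar result that on simple surfaces the ranges of $\eta_+,\eta_-$ on the relevant $L^2$ spaces can be controlled. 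Conjugating by $R$ reduces the system to a transport equation with a purely \emph{holomorphic} (or antiholomorphic) right-hand side, after which one applies the surjectivity of the adjoint fibrewise Hilbert transform / the Pestov identity to show that $u$ itself must be of low degree: concretely, that $u=u_{-1}+u_0$ for suitable components, matching the structure $f=F+\sigma$ with $F$ of degree $0$ and $\sigma$ of degrees $\pm1$.

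From the low-degree conclusion I would extract the potential $p:M\to\mathbb{C}^n$ by setting $p:=u_0$ (the degree-zero Fourier component), which is a genuine function on $M$. The vanishing of $u$ on $\partial(SM)$ gives $p|_{\partial M}=0$. Feeding $u=u_{-1}+u_0+u_1$ back into the transport equation $(X+\lambda V)u+\mathcal Au=-f$ and separating by Fourier degree then yields, degree by degree, the identities $\sigma=d_Ap$ (from the degree $\pm1$ equations, where the $\lambda V$ term acts diagonally and the connection components of $\mathcal A$ combine with $\eta_\pm$ to produce exactly the covariant differential) and $F=\Phi p$ (from the degree-zero equation). The main obstacle I anticipate is the construction and degree control of the holomorphic integrating factor $R$ in the presence of the magnetic term $\lambda V$: unlike the geodesic case, $V$ no longer commutes cleanly with the decomposition in the same way, and one must verify that the scattering-data/surjectivity results of \cite{paternain111} genuinely transfer once $X$ is replaced by $X+\lambda V$. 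This amounts to checking that the magnetic Pestov identity still controls the same Fourier modes and that the perturbation by $\lambda V$ does not obstruct the existence of fibrewise holomorphic solutions — a point I expect to require the strict magnetic convexity of $\partial M$ in an essential way.
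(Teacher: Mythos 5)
There is a genuine gap at the heart of your proposal: the matrix integrating factor $R:SM\rightarrow GL(n,\mathbb{C})$ with $(X+\lambda V)R+\mathcal{A}R=0$ \emph{and} controlled fibrewise holomorphicity. The surjectivity results you invoke (the analogues of the Pestov--Uhlmann range characterizations, transferred to the magnetic case via \cite{dairbekov107}) only produce \emph{scalar} holomorphic integrating factors for attenuations that are sums of a function and a $1$-form on $M$; they say nothing about non-abelian, matrix-valued attenuations. The existence of holomorphic integrating factors for a general unitary connection is precisely the obstruction that \cite{paternain111} was designed to circumvent, and the present paper inherits that strategy: the scalar integrating factor (Theorem~\ref{thetheorem}) is used only for the scalar $1$-form $i\varphi$ with $d\varphi=dV_g$, in order to replace $A$ by $A_s=A+is\varphi\operatorname{Id}$ and shift the curvature, $i\ast F_{A_s}=i\ast F_A-s\operatorname{Id}$. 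One then sets $v=(\operatorname{Id}-i\mathcal{H})u_s-(u_s)_0$, observes via the commutator formula (Proposition~\ref{commutatorformula}) that $(X+\lambda V+A_s+\Phi)v$ is again a function plus a $1$-form, and applies the magnetic energy identity (Theorem~\ref{energy}) together with the Riccati/index-form estimate (Lemma~\ref{secondlemma}) and the curvature term growing like $s$ to force $v=0$ for $s$ large. Your conjugation-by-$R$ step replaces this entire mechanism with an object whose existence is not established by anything you cite, and which cannot be obtained from the scalar surjectivity results; as written the argument does not go through.

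Two further points. First, you assert that the low-degree conclusion is $u=u_{-1}+u_0$ (later $u_{-1}+u_0+u_1$); the correct conclusion, and the one needed to define $p$ as a function on $M$, is that $u$ is simultaneously holomorphic and antiholomorphic, hence $u=u_0$ exactly --- with $u_{\pm1}$ present, separating the transport equation by Fourier degree leaves extra terms and does not yield $\sigma=d_Ap$ with $p\in C^{\infty}(M)$. Second, you use $u\in C^{\infty}(SM)$ implicitly throughout, but the solution $u^f$ of the transport equation is not automatically smooth; the paper devotes Section~4 (Proposition~\ref{theproposition}, via the scattering relation and the embedding of $M$ into a slightly larger simple magnetic system) to proving smoothness when $I_{\mathcal{A}}f=0$, and this step cannot be skipped before applying any $L^2$ energy identity with integration by parts.
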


The above question with no magnetic field present was addressed in \cite{paternain111}, and also in \cite{salo11} with \(n=1\) and no Higgs field.\\
\indent Given any nontrapping compact manifold \((M,g)\) with strictly convex boundary, we define the \textit{scattering relation} \(\mathcal{S}:\partial_{+}(SM)\rightarrow\partial_{-}(SM)\) to map a starting point and direction \((x,v)\in\partial_{+}(SM)\) to its exit point and direction under the magnetic flow. Since we are working with a connection and Higgs field on the trivial bundle we obtain another piece of scattering data as follows. Let \(U_{+}:SM\rightarrow GL(n,\mathbb{C})\) be the unique matrix solution to the transport equation:
\[(X+\lambda V)U_{+}+AU_{+}+\Phi U_{+}=0\ \text{in}\ SM,\ \ U_{+}|_{\partial_{-}(SM)}=\operatorname{Id}.\]
\begin{definition} The scattering relation associated to the attenuation pair \((A,\Phi)\) is given by the map:
\[C^{A,\Phi}_{+}:\partial_{+}(SM)\rightarrow GL(n,\mathbb{C}),\ \ C^{A,\Phi}_{+}:=U_{+}|_{\partial_{+}(SM)}.\]
We will also use the notation \(C^{\mathcal{A}}_{+}\) when \(\mathcal{A}=A+\Phi.\)
\end{definition}
This allows us to pose another inverse question: given a fixed simple magnetic system, can one recover \((A,\Phi)\) from the knowledge of \(C^{A,\Phi}_{+}\)? Observe that the scattering relation exhibits the following gauge invariance:
\[C^{Q^{-1}(X+\lambda V+A)Q,Q^{-1}\Phi Q}_{+}=C^{A,\Phi}_{+},\ \ \text{if}\ Q\in C^{\infty}(M,GL(n,\mathbb{C}))\ \text{satisfies}\ Q|_{\partial M}=\operatorname{Id}.\]
Indeed, suppose \(Q\in C^{\infty}(M,GL(n,\mathbb{C}))\), then \(Q^{-1}U_{+}\) satisfies
\[(X+\lambda V + Q^{-1}(X+\lambda V + A)Q+Q^{-1}\Phi Q)(Q^{-1}U_{+})=0\ \text{in}\ SM,\ \ Q^{-1}U_{+}|_{\partial_{-}(SM)}=Q^{-1}|_{\partial_{-}(SM)}\]
Now we are working with a unitary \(A\), and skew-Hermitian \(\Phi\), therefore \(U_{+}\) and \(C^{A,\Phi}_{+}\) will take values in \(U(n)\), and from the above calculation \(C^{A,\Phi}_{+}\) will be invariant under unitary gauge transformations which are the identity on the boundary. Hence, we have to modify our question to account for this natural obstruction. The following theorem gives a positive answer when \(\text{dim}(M)=2.\)

\begin{theorem} Let \((M,g,\alpha)\) be a simple magnetic system with \(\operatorname{dim}(M)=2.\) Let \(A\) and \(B\) be two unitary connections, and let \(\Phi\) and \(\Psi\) be two skew-Hermitian matrix functions. Then \(C^{A,\Phi}_{+}=C^{B,\Psi}_{+}\) implies that there exists a smooth \(U:M\rightarrow U(n)\) such that \(U|_{\partial{M}}=\operatorname{Id}\) and \(B=U^{-1}dU+U^{-1}AU\), \(\Psi=U^{-1}\Phi U.\)\label{gauge}
\end{theorem}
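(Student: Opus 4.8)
The plan is to reduce Theorem \ref{gauge} to the injectivity result of Theorem \ref{mainresult}, applied on the bundle of endomorphisms. First I would form the two fundamental matrix solutions $U_{+}^{A,\Phi}$ and $U_{+}^{B,\Psi}$ and set $W:=U_{+}^{A,\Phi}\,(U_{+}^{B,\Psi})^{-1}:SM\rightarrow U(n)$. Differentiating along the magnetic flow and using the two transport equations, a short computation gives
\[(X+\lambda V)W+AW+\Phi W-WB-W\Psi=0\quad\text{in } SM.\]
On $\partial_{-}(SM)$ both factors equal $\operatorname{Id}$, so $W=\operatorname{Id}$ there; on $\partial_{+}(SM)$ we have $W=C^{A,\Phi}_{+}\,(C^{B,\Psi}_{+})^{-1}=\operatorname{Id}$ by hypothesis. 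Hence $W\equiv\operatorname{Id}$ on all of $\partial(SM)$.

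Next I would recast the equation on the trivial bundle $M\times\mathbb{C}^{n\times n}$ of endomorphisms. Define $\hat{A}\cdot W:=AW-WB$ and $\hat{\Phi}\cdot W:=\Phi W-W\Psi$. With respect to the Hilbert--Schmidt inner product $\langle X,Y\rangle=\operatorname{tr}(XY^{*})$, a direct cyclicity-of-trace check shows $\hat{A}$ is a unitary connection and $\hat{\Phi}$ is skew-Hermitian, so Theorem \ref{mainresult} is available for the pair $(\hat{A},\hat{\Phi})$ (the rank is now $n^{2}$, but the theorem holds at any rank). Set $R:=W-\operatorname{Id}$. Since $(X+\lambda V)\operatorname{Id}=0$ and $\hat{A}\cdot\operatorname{Id}+\hat{\Phi}\cdot\operatorname{Id}=(A-B)+(\Phi-\Psi)=:f_{0}$, the matrix $R$ solves $(X+\lambda V)R+\hat{A}R+\hat{\Phi}R=-f_{0}$ with $R|_{\partial_{-}(SM)}=0$, so $R=u^{f_{0}}$ is exactly the solution defining the transform. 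Because $R|_{\partial_{+}(SM)}=W|_{\partial_{+}(SM)}-\operatorname{Id}=0$, we conclude $I_{\hat{A},\hat{\Phi}}(f_{0})=0$, where $f_{0}=F_{0}+\sigma_{0}$ with $F_{0}=\Phi-\Psi$ a function and $\sigma_{0}=A-B$ a $1$-form --- precisely the form required by Theorem \ref{mainresult}.

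Applying Theorem \ref{mainresult} then yields a smooth $p:M\rightarrow\mathbb{C}^{n\times n}$ with $p|_{\partial M}=0$, $F_{0}=\hat{\Phi}\,p=\Phi p-p\Psi$ and $\sigma_{0}=d_{\hat{A}}p=dp+Ap-pB$. Setting $U:=\operatorname{Id}-p$, so that $U|_{\partial M}=\operatorname{Id}$, I would rearrange these two identities into $dU+AU=UB$ and $\Phi U=U\Psi$, which are exactly $B=U^{-1}dU+U^{-1}AU$ and $\Psi=U^{-1}\Phi U$. Finally, to see that $U$ is unitary I would compute $d(U^{*}U)=[U^{*}U,B]$ using the skew-Hermicity of $A$ and $B$; since $U^{*}U=\operatorname{Id}$ on $\partial M$ and $\operatorname{Id}$ solves this first-order linear system, uniqueness of parallel transport on the topologically trivial $M$ forces $U^{*}U\equiv\operatorname{Id}$, i.e.\ $U:M\rightarrow U(n)$.

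The real content here is carried entirely by Theorem \ref{mainresult}; the work above is largely organizational. The genuinely non-formal points, where I expect to have to be careful, are the verification that the induced objects $\hat{A},\hat{\Phi}$ on the endomorphism bundle do satisfy the unitarity hypotheses of Theorem \ref{mainresult} (so that it may legitimately be invoked at rank $n^{2}$), together with the closing argument upgrading the gauge $U$ from merely invertible to unitary. I should also confirm the requisite smoothness of $W$ up to $\partial(SM)$, which in the simple setting follows from the smooth dependence of the transport solutions, so that $u^{f_{0}}$ is an admissible input to Theorem \ref{mainresult}.
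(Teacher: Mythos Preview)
Your argument is correct and follows the paper's proof essentially verbatim: form the quotient of the two fundamental matrix solutions, subtract the identity, interpret the resulting transport equation on the endomorphism bundle $M\times\mathbb{C}^{n\times n}$ via the induced pair $(\hat A,\hat\Phi)$, read off $I_{\hat A,\hat\Phi}(A-B+\Phi-\Psi)=0$, and invoke Theorem~\ref{mainresult}. The only point where you diverge is at the very end. The paper does not argue unitarity of $U$ separately; instead it observes that $R$ (your notation) and $-p$ both solve the \emph{same} linear transport ODE on each magnetic geodesic with the same vanishing data on $\partial_{-}(SM)$, so by uniqueness $R=-p$, whence your $U:=\operatorname{Id}-p$ literally coincides with your original $W=U_{+}^{A,\Phi}(U_{+}^{B,\Psi})^{-1}$, which was $U(n)$-valued from the outset. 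This single uniqueness step delivers smoothness, dependence on the basepoint only, and unitarity simultaneously, and makes your closing parallel-transport computation $d(U^{*}U)=[U^{*}U,B]$ unnecessary (though your computation is valid). Your remark about needing smoothness of $W$ up to $\partial(SM)$ is also superfluous: Theorem~\ref{mainresult} only asks that $f_{0}$ be smooth, and the smoothness of $u^{f_{0}}$ is handled internally by Proposition~\ref{theproposition}.
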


\indent In the final section of the paper we provide an application to tensor tomography. If \(h\) is a symmetric covariant \((m-1)\)-tensor field we define its inner derivative \(d^{s}h\) to be a symmetric \(m\)-tensor field as follows \(d^{s}h:=\sigma\nabla h\), where \(\sigma\) denotes symmetrization, and \(\nabla\) is the Levi-Civita connection. Now by a well known result, see \cite{sharafutdinov94}, one can uniquely decompose a symmetric \(m\)-tensor \(f\) into \(f=f^{s}+d^{s}h\) where \(h\) is a symmetric \((m-1)\)-tensor which vanishes on the boundary, and \(f^{s}\) is a symmetric \(m\)-tensor with zero divergence. One refers to \(f^{s}\) and \(d^{s}h\) as the solenoidal and potential components of \(f\) respectively.\\
\indent Given a symmetric covariant \(m\)-tensor field \(f=f_{i_{1}\cdots i_{m}}dx^{i_{1}}\otimes\cdots\otimes dx^{i_{m}} \) on \(M\) we have an associated function on \(SM\) defined by \[\hat{f}(x,v):=f_{i_{1}\cdots i_{m}}v^{i_{1}}\cdots v^{i_{m}}.\] Given any continuous function on \(SM\) we may consider its image under the ray transform. The tensor tomography question asks what is the kernel of the ray transform in the space of all symmetric covariant tensors. Historically, the question has been posed for the geodesic ray transform. By a direct calculation one can show for a symmetric \((m-1)\)-tensor \(h\), that \(dh(x,v)=Xh(x,v)\), therefore, by the Fundamental Theorem of Calculus, if \(h|_{\partial M}=0\), then \(dh\) will be contained in the kernel of the geodesic ray transform. In \cite{paternain211} it was shown that when \((M,g)\) is a simple Riemannian \(2\)-manifold these are the only elements in the kernel, and this holds for symmetric tensors of all ranks \(m\geq 0\).\\
\indent In this paper we consider the analogous result in the magnetic setting, thus we fix a simple magnetic system \((M,g,\alpha)\), where \(\operatorname{dim}(M)=2.\) We will denote the unattenuated magnetic ray transform by \(I\). Note that its kernel will have a more convoluted form than its geodesic counterpart because purely potential elements which vanish on the boundary will no longer necessarily map to \(0.\) Nevertheless, using the methods of \cite{paternain211} we generalize the result obtained there to obtain:

\begin{theorem} Let \((M,g,\alpha)\) be a simple magnetic system with \(\operatorname{dim}(M)=2\). Suppose \(f_{i}\) are symmetric i-tensor fields on \(M\) for \(0\leq i\leq k.\) If \(I(\sum_{i=0}^{k}\hat{f}_{i})=0,\) then 
\begin{equation}
(\sum_{i=0}^{k}\hat{f}_{i}) =(X+\lambda V)\left(\sum_{i=0}^{k-1}\hat{u}_{i}\right) \label{onelasttime}
\end{equation}
where  \(u_{i}\) are symmetric i-tensor fields such that \(\hat{u}_{i}|_{\partial(SM)}=0.\) If \(k=0\), then \(f_{0} = 0\). \label{tensortomography}
\end{theorem}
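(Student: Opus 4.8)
The plan is to follow the transport-equation method of \cite{paternain211}, with the magnetic generator $\mathbb{G}:=X+\lambda V$ playing the role of the geodesic field $X$. First I would solve the boundary value problem $\mathbb{G}u=-\sum_{i=0}^{k}\hat f_{i}$ on $SM$ with $u|_{\partial_{-}(SM)}=0$. Because $I(\sum_{i}\hat f_{i})=0$, the solution vanishes on $\partial_{+}(SM)$ as well, so $u$ is smooth on $SM$ (up-to-boundary regularity coming from the simplicity hypotheses, exactly as in the geodesic setting) and $u|_{\partial(SM)}=0$. The entire theorem then reduces to one structural fact: that this $u$ has \emph{finite Fourier degree}, with nonzero fibre-harmonics only in degrees $-(k-1),\dots,k-1$.

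To exploit the circle action in the fibres I would expand $u=\sum_{m\in\mathbb{Z}}u_{m}$, where $Vu_{m}=imu_{m}$, and use the splitting $X=\eta_{+}+\eta_{-}$ with $\eta_{\pm}$ raising/lowering the fibre degree by one. The one new feature is the magnetic term: since $\lambda$ is a function on $M$ we have $\lambda Vu_{m}=im\lambda\,u_{m}$, so $\lambda V$ preserves the fibre degree and contributes only a zeroth-order, degree-$m$ term. Hence $\mathbb{G}u=-f$ decouples, in degree $m$, into $\eta_{+}u_{m-1}+\eta_{-}u_{m+1}+im\lambda\,u_{m}=-(\hat f)_{m}$, whose right-hand side vanishes whenever $|m|>k$.

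The crux, and the step I expect to be the main obstacle, is to show that these relations force $u_{m}=0$ for all $|m|\ge k$. In \cite{paternain211} this is precisely the content of the Pestov energy identity, whose decisive ingredient is the curvature term coming from $[X,X_{\perp}]=-KV$. In the magnetic case the relevant commutator is $[\mathbb{G},X_{\perp}]$, which yields a \emph{modified curvature} assembled from $K$ together with $X_{\perp}\lambda$ and $\lambda^{2}$, plus an extra term proportional to $\mathbb{G}$. The work is therefore to re-derive the Pestov identity for $\mathbb{G}$ and to verify that, under strict magnetic convexity and simplicity of the magnetic system --- equivalently, the absence of magnetic conjugate points --- the resulting quadratic form stays coercive on the high harmonics, so that the boundary condition $u|_{\partial(SM)}=0$ forces them to vanish. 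The holomorphic integrating factors already built for Theorems \ref{mainresult} and \ref{gauge} furnish the supporting tool here.

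With finite degree in hand, I would pass back to tensors via the two-dimensional correspondence identifying a trace-free symmetric $i$-tensor with the harmonic pair of degrees $\pm i$ (and degree $0$ with a function): writing $u=\sum_{i=0}^{k-1}\hat u_{i}$ with each $u_{i}$ trace-free, the equation $\mathbb{G}u=-\sum_{i}\hat f_{i}$ gives at once $\sum_{i=0}^{k}\hat f_{i}=\mathbb{G}\big(\sum_{i=0}^{k-1}\widehat{(-u_{i})}\big)$. Moreover $u|_{\partial(SM)}=0$ means each harmonic $u_{m}$ vanishes over $\partial M$, so every trace-free tensor $u_{i}$ vanishes on $\partial M$, i.e. $\hat u_{i}|_{\partial(SM)}=0$, as claimed. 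Finally the case $k=0$ is immediate: the degree bound leaves $u$ with harmonics only in degrees $\le-1$, hence $u\equiv0$ and $f_{0}=-\mathbb{G}u=0$.
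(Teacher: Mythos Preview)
Your overall architecture is right---solve the transport equation, verify smoothness and boundary vanishing, prove $u$ has degree at most $k-1$, then decompose into tensors---and this is exactly the skeleton of the paper's proof. The gap is in the crux step, where you propose to establish finite degree by re-deriving a magnetic Pestov identity and checking coercivity on the high harmonics. This is \emph{not} what \cite{paternain211} does, and it is not clear such an argument can be made to work for arbitrary $k$: the Pestov identity (Theorem~\ref{energy} here) is only effective when the right-hand side is a function plus a $1$-form, because that is what makes the term $|V[(X+\lambda V)u]|^{2}-|(X+\lambda V)u|^{2}$ non-positive (Lemma~\ref{firstlemma}). For higher-degree $f$ this sign fails, and a direct energy argument on simple surfaces without curvature hypotheses is precisely what was unavailable before \cite{paternain211}.

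The paper (following \cite{paternain211}) instead proves finite degree via Theorem~\ref{degreedowntheorem}, whose mechanism is different: pick a nonvanishing $h\in\Omega_{m}$, set $A:=-h^{-1}(X+\lambda V)h$, and observe that $hu$ solves an \emph{attenuated} transport equation with holomorphic right-hand side $hf$. A holomorphic integrating factor for $A$ (Theorem~\ref{thetheorem}) then reduces this to the unattenuated holomorphic case (Proposition~\ref{solutionhol}), which in turn is handled by Theorem~\ref{mainresult}---and it is only inside the proof of Theorem~\ref{mainresult} that the Pestov identity is actually invoked, applied to a function-plus-$1$-form situation where it is decisive. So the integrating factors are not a supporting tool for an energy argument; they are the device that \emph{replaces} a direct high-degree energy argument by bootstrapping from the degree-$1$ injectivity already established.
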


We note that the case where \(k=2\) was considered in \cite{dairbekov107}. There it was shown using pseudodifferential techniques that the magnetic ray transform is injective on symmetric tensors of degree at most \(2\) (up to the natural obstruction described in our theorem above) whenever \((g,\alpha)\) are analytic, and also for generic \((g,\alpha)\). Our theorem thus recovers their result, and moreover, holds for tensors of arbitrary rank.\\
\indent The proof we employ involves considering holomorphic integrating factors for functions and 1-forms simultaneously, since the magnetic field couples the equations.

\subsection{\normalsize Acknowledgements} 
I would like to thank my advisor, Gabriel Paternain, for all his support and encouragement. Thanks also to Nurlan Dairbekov and Yernat Assylbekov for some helpful comments on an earlier draft. Finally I gratefully acknowledge the Fields Institute for their hospitality during their Program on Geometry in Inverse Problems 2012, where this work was partially carried out.

\section{Preliminaries}
\indent \indent Let \(M\) be a compact, oriented Riemannian \(2\)-manifold with boundary. As previously \(X\) denotes the vector field on \(SM\) generated by the geodesic flow. Since \(M\) is oriented, \(SM\) is an \(S^{1}\)-fibration with a circle action on the fibres inducing a vector field which we shall denote by \(V\). By defining \(X_{\bot}:=[X,V]\), we obtain a global frame \(\left\{ X,X_{\bot},V \right\}\) for \(T(SM)\). The two remaining commutators will play an important role in what follows, and are given by (see \cite{kazhdan80}): \([V,X_{\bot}]=X\), and \([X,X_{\bot}]=-KV\), where \(K\) is the Gaussian curvature of \(M\). We define a Riemannian metric on \(SM\) by declaring that \(\left\{X,X_{\bot},V\right\}\) form an orthonormal basis, and will denote by \(d\Sigma^{3}\) the volume form of this metric.\\
\indent We define an inner product between functions \(u,v:SM\rightarrow \mathbb{C}^{n}\) as follows: 
\[\left\langle u,v\right\rangle:=\int_{SM}\left\langle u,v\right\rangle _{\mathbb{C}^{n}} d\Sigma^{3}.\]
Now, \(L^{2}(SM,\mathbb{C}^{n})\) decomposes orthogonally as
\[L^{2}(SM,\mathbb{C}^{n})=\bigoplus_{k\in\mathbb{Z}}H_{k}\]
where \(-iV\) acts as \(k\operatorname{Id}\) on \(H_{k}.\) Thus, we can decompose a smooth function \(u:SM\rightarrow\mathbb{C}^{n}\) into its Fourier components 
\[u=\sum^{\infty}_{k=-\infty}u_{k}\]
where \(u_{k}\in\Omega_{k}:=C^{\infty}(SM,\mathbb{C}^{n})\cap H_{k}.\)\\
\indent We next introduce a fundamental operator \(\mathcal{H}\) on \(L^{2}(SM,\mathbb{C}^{n})\) known as the Hilbert transform. This operator was used systematically in \cite{pestov05}. We will follow the approach of \cite{paternain111} and introduce it via a fibrewise definition. Given \(u\in L^{2}(SM,\mathbb{C}^{n})\) we have \(\mathcal{H}(u):=\sum_{k}\mathcal{H}(u_{k})\) and,
\[\mathcal{H}(u_{k}):=-\operatorname{sgn}(k)iu_{k}\]
where we adopt the convention that \(\operatorname{sgn}(0)=0.\) Observe that
\[(\operatorname{Id}+i\mathcal{H})u=u_{0}+2\sum^{\infty}_{k=1}u_{k},\]
\[(\operatorname{Id}-i\mathcal{H})u=u_{0}+2\sum^{-1}_{k=-\infty}u_{k}.\]
\begin{definition} A function \(u:SM\rightarrow\mathbb{C}^{n}\) is said to be holomorphic if \((\operatorname{Id}-i\mathcal{H})u=u_{0}.\) Equivalently, \(u\) is holomorphic if \(u_{k}=0\) for all \(k<0.\) Similarly, \(u\) is said to be antiholomorphic if \((\operatorname{Id}+i\mathcal{H})u=u_{0}\), or equivalently, if \(u_{k}=0\) for all \(k>0.\)
\end{definition}
We now introduce a commutator formula for the Hilbert transform and the magnetic flow. This formula first appeared in \cite{pestov05} (in a less general formulation), and has since often been used in the resolution of other inverse problems, see \cite{dairbekov107, salo11, paternain111, paternain211}.

\begin{proposition} \([\mathcal{H}, X+\lambda V+A+\Phi]u=(X_{\bot}+\ast A)u_{0}+\left\{(X_{\bot}+\ast A)u\right\}_{0}.\)\label{commutatorformula}
\end{proposition}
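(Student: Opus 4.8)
The plan is to exploit that $\mathcal{H}$ is diagonal in the Fourier decomposition $L^{2}(SM,\mathbb{C}^{n})=\bigoplus_{k}H_{k}$, acting as the scalar $-i\,\operatorname{sgn}(k)$ on $H_{k}$. Consequently $\mathcal{H}$ commutes with any operator preserving Fourier degree, and its commutator with a degree-shifting operator is controlled entirely by the jumps of $\operatorname{sgn}$ near $k=0$. I first observe that among the four summands of $X+\lambda V+A+\Phi$, both $\lambda V$ and $\Phi$ preserve each $H_{k}$: indeed $\Phi$ is a matrix function on $M$ (hence degree $0$) acting by multiplication, and $\lambda V$ is multiplication by the degree-$0$ function $\lambda$ followed by $V$, which preserves degree. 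Therefore $[\mathcal{H},\lambda V]=[\mathcal{H},\Phi]=0$, and the whole commutator reduces to $[\mathcal{H},X]u+[\mathcal{H},A]u$. It is worth emphasising that the magnetic term drops out at this stage, so the formula is formally identical to its geodesic analogue.

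Next I would record the degree-shifting structure of $X$ and $A$. From the relations $[X,V]=X_{\bot}$ and $[V,X_{\bot}]=X$ recorded in this section one gets $\operatorname{ad}_{V}^{2}(X)=-X$; since a degree-$m$ operator $P$ satisfies $[V,P]=imP$, this forces $X$ to split as $X=\eta_{+}+\eta_{-}$ with $\eta_{\pm}\colon\Omega_{k}\to\Omega_{k\pm1}$, and then $X_{\bot}=-[V,X]=i(\eta_{-}-\eta_{+})$. Likewise, since $A(x,v)$ is linear in $v$, its restriction to $SM$ lies in $\Omega_{1}\oplus\Omega_{-1}$, so $A=A_{1}+A_{-1}$ acts as a degree-raising plus a degree-lowering multiplication operator. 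A short computation with the sign function then gives, for any operator $P_{+}$ raising degree by one, $[\mathcal{H},P_{+}]u=-iP_{+}(u_{0}+u_{-1})$, and for $P_{-}$ lowering degree by one, $[\mathcal{H},P_{-}]u=iP_{-}(u_{0}+u_{1})$; the only surviving indices are $k=0,-1$ and $k=0,1$ respectively, because $\operatorname{sgn}(k\pm1)-\operatorname{sgn}(k)$ vanishes away from these.

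Applying this to $X=\eta_{+}+\eta_{-}$ and collecting terms yields $[\mathcal{H},X]u=i(\eta_{-}-\eta_{+})u_{0}+\big(i\eta_{-}u_{1}-i\eta_{+}u_{-1}\big)=X_{\bot}u_{0}+(X_{\bot}u)_{0}$, since the bracketed degree-$0$ terms are exactly the degree-$0$ part of $X_{\bot}u=i(\eta_{-}-\eta_{+})u$. The same bookkeeping for $A=A_{1}+A_{-1}$ gives $[\mathcal{H},A]u=i(A_{-1}-A_{1})u_{0}+\big(iA_{-1}u_{1}-iA_{1}u_{-1}\big)$. The crucial geometric identification, and the step I expect to require the most care, is that multiplication by $i(A_{-1}-A_{1})$ is precisely multiplication by $\ast A$: this is the statement that the Hodge star acts as $\mp i$ on the degree-$\pm1$ Fourier component of a $1$-form on the oriented surface, i.e. $\ast A=-iA_{1}+iA_{-1}$. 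Granting this, $[\mathcal{H},A]u=(\ast A)u_{0}+\{(\ast A)u\}_{0}$, and adding the two contributions gives $[\mathcal{H},X+\lambda V+A+\Phi]u=(X_{\bot}+\ast A)u_{0}+\{(X_{\bot}+\ast A)u\}_{0}$, as claimed. The one point to verify with care is the sign convention for $\ast$ on the Fourier modes, since a wrong sign there would flip $\ast A$ to $-\ast A$; everything else is forced by the commutation relations already available.
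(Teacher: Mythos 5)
Your proof is correct and follows essentially the same route as the paper: the paper's entire proof consists of the observation that $[\mathcal{H},\lambda V]=0$ (since $\lambda$ has degree zero and $V$ preserves each $H_{k}$) together with a citation to \cite{paternain111} for the non-magnetic identity, and you make exactly that reduction before supplying the standard degree-decomposition computation ($X=\eta_{+}+\eta_{-}$, $A=A_{1}+A_{-1}$, jumps of $\operatorname{sgn}$ at $k=0,\pm 1$) that the cited reference contains. The sign convention $\ast A=-iA_{1}+iA_{-1}$ that you flag is the one consistent with the paper's identity $X_{\bot}f=\ast df$, so that step checks out.
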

\begin{proof} The proof follows immediately from that given in \cite{paternain111} once one notices that \([\mathcal{H},\lambda V]=0.\)
\end{proof}

\section{Holomorphic Integrating Factors}
\indent \indent In Definition~\ref{defntransform} we defined the magnetic ray transform of \(f\in C^{\infty}(SM,\mathbb{C}^{n})\) as \(u^{f}|_{\partial_{+}(SM)}\) where \(u^{f}\) is the solution of the transport equation: 
\[(X+\lambda V)u+\mathcal{A}u=-f\ \ \text{in}\ SM,\ \ u|_{\partial_{-}(SM)}=0.\]
Here we derive an alternative integral representation. Let \(U_{-}:SM\rightarrow GL(n,\mathbb{C})\) be the unique matrix solution to the equation:
\[(X+\lambda V)U_{-}+\mathcal{A}U_{-}=0\ \text{in}\ SM,\ \ U_{-}|_{\partial_{+}(SM)}=\operatorname{Id}.\]
Now \(U^{-1}_{-}\) solves \((X+\lambda V)U^{-1}_{-}-U^{-1}_{-}\mathcal{A}=0.\) Therefore, \((X+\lambda V)(U^{-1}_{-}u^{f})=-U^{-1}_{-}f.\)
Integrating from \(0\) to \(\tau(x,v)\) for \((x,v)\in\partial_{+}(SM)\) we obtain:
\[I_{\mathcal{A}}f(x,v)=\int^{\tau(x,v)}_{0}U^{-1}_{-}(\varphi_{t}(x,v))f(\varphi_{t}(x,v))dt\]
For the rest of this section we will employ the notation \(I_{0}\) and \(I_{1}\) for the unattenuated magnetic ray transform acting on functions (on \(M\)) and \(1\)-forms respectively. Observe that in the unattenuated case when \(n=1\), the above representation yields for \(i=0,1\):
\[I_{i}f(x,v)=\int^{\tau(x,v)}_{0}f(\varphi_{t}(x,v))dt.\]
Here \(\tau(x,v)\) is the exit time of the magnetic geodesic determined by \((x,v)\). We also define \[C^{\infty}_{\alpha}(\partial_{+}(SM)):=\left\{h\in C^{\infty}(\partial_{+}(SM));\  h_{\psi}\in C^{\infty}({SM})\right\}\] where \(h_{\psi}\) is the function defined by \(h\) on \(\partial_{+}(SM)\), and extended to be constant along the orbits of the magnetic flow.

In \cite{dairbekov107} it is shown that the magnetic ray transform extends to a bounded operator from \(L^{2}(SM)\) to \(L^{2}_{\mu}(\partial_{+}(SM)),\) where the latter Hilbert space is defined as the space of real-valued functions \(h\) on \(\partial_{+}(SM)\) for which the following norm is finite:
\[\left\|h\right\|^{2}=\int_{\partial_{+}(SM)}h^{2}d\mu.\]
Here \(d\mu\) is a weighted local product measure, see \cite{dairbekov107} for details. Thus, we are justified in introducing the adjoint of the ray transform. First, we digress to make an important definition.\\

\begin{definition} A holomorphic (respectively antiholomorphic) integrating factor for the equation:
\[Xu+\mathcal{A}u=-f\ \ \text{in}\ SM,\]
(in the case when \(n=1\)) is defined to be any complex function \(\omega\in C^{\infty}(SM)\) which is holomorphic (respectively antiholomorphic), and such that 
\[X\omega=-\mathcal{A}\ \text{in}\ SM.\]
 Here recall that \(\mathcal{A}\) is the sum of a smooth complex function on \(M\) and a complex \(1\)-form. 
\end{definition}

The main ingredient in the proof of Theorem~\ref{mainresult} will turn to be the existence of holomorphic (and antiholomorphic) integrating factors. In turn, the existence of holomorphic integrating factors fundamentally rests on the surjectivity of the adjoint of the unattenuated ray transform on functions and \(1\)-forms. This was proven in the geodesic case in \cite{pestov05} and \cite{pestov04}, and in the magnetic case in \cite{dairbekov107}. Using these results we prove the following two lemmas which will lead swiftly to the desired theorem. We will use the following notation: \(C^{\infty}_{\delta}(M, \Lambda^{1}):=\left\{\beta\in C^{\infty}(M, \Lambda^{1}):\delta\beta=0\right\}.\)

\begin{lemma} The map given by \(S:C^{\infty}_{\alpha}(\partial_{+}(SM))\rightarrow C^{\infty}_{\delta}(M, \Lambda^{1})\), where \(S(h):= X_{\bot}(h_{\psi})_{0}\), is surjective.\label{surj1}
\end{lemma}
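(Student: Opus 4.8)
The plan is to reduce the surjectivity of $S$ to the surjectivity of the adjoint $I_0^*$ of the unattenuated magnetic ray transform on functions, which is available from \cite{dairbekov107}. The first thing I would do is make precise the sense in which $S(h)$ is a $1$-form. For a function $w\in C^\infty(M)$, regarded as an element of $\Omega_0$, one has $Xw, X_\bot w\in\Omega_1\oplus\Omega_{-1}$, and under the identification of such functions on $SM$ with $1$-forms on $M$ via $\hat{\sigma}(x,v)=\sigma_x(v)$, the function $Xw$ corresponds to $dw$ while $X_\bot w=[X,V]w=-V(Xw)$ corresponds to $\star dw$ (up to sign). Applying this to $w:=(h_\psi)_0\in C^\infty(M)$ shows that $S(h)=X_\bot(h_\psi)_0$ is the $1$-form $\star d(h_\psi)_0$. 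Since $\delta(\star dw)=0$ on a surface, this simultaneously verifies that $S$ is well defined as a map into $C^\infty_\delta(M,\Lambda^1)$.

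For surjectivity I would start from an arbitrary coclosed $\beta\in C^\infty_\delta(M,\Lambda^1)$ and exploit the topological triviality guaranteed by simplicity. Since $\delta\beta=0$, the $1$-form $\star\beta$ is closed; because $M$ is diffeomorphic to a ball, $\star\beta$ is exact, say $\star\beta=dw$ for some $w\in C^\infty(M)$, whence $\beta=\pm\star dw$. Comparing with the previous paragraph, it therefore suffices to produce $h\in C^\infty_\alpha(\partial_+(SM))$ with $(h_\psi)_0=\pm w$; the freedom to add a constant to $w$ is harmless, since $\star d$ annihilates constants.

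It then remains to solve $(h_\psi)_0=w$. Here I would invoke Santal\'{o}'s formula in the magnetic setting to identify the fibre-average map $h\mapsto(h_\psi)_0$ with a nonzero constant multiple of $I_0^*$: pairing $I_0f$ against $h$ in $L^2_\mu(\partial_+(SM))$ and using that $h_\psi$ is constant along the magnetic flow (so that $h_\psi(\varphi_t(x,v))=h(x,v)$) yields $\langle I_0f,h\rangle=c\langle f,(h_\psi)_0\rangle_{L^2(M)}$ for all $f\in C^\infty(M)$, so that $I_0^* h=c\,(h_\psi)_0$. The surjectivity of $I_0^*$ onto $C^\infty(M)$, established in \cite{dairbekov107} as the magnetic analogue of the Pestov--Uhlmann result, then furnishes the required $h$, completing the proof.

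The genuinely hard input is precisely this last surjectivity statement, which rests on the fibre-analytic arguments of \cite{dairbekov107} and which I take as given. Within the present argument the points demanding care are the verification that $X_\bot$ applied to a zeroth Fourier mode corresponds exactly to $\star d$ of the associated function (the correct sign, and the consequent coclosedness), and the bookkeeping ensuring that the $h$ produced lies in $C^\infty_\alpha(\partial_+(SM))$, i.e.\ that $h_\psi$ is genuinely smooth on all of $SM$; this regularity is exactly the content packaged into the cited surjectivity result.
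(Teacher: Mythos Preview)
Your proposal is correct and follows essentially the same approach as the paper: identify $X_\bot$ on zeroth Fourier modes with $\ast d$, write a given coclosed $\beta$ as $\ast d$ of a function using simple connectedness, and then invoke the surjectivity of $I_0^\ast$ from \cite{dairbekov107} (where $I_0^\ast h = 2\pi(h_\psi)_0$) to hit that function. The paper's version is terser and fixes the sign from the outset, but the argument is the same.
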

\begin{proof} Let \(\beta\in C^{\infty}_{\delta}(M, \Lambda^{1})\). We know that \(d(\ast\beta)=0\) and \(M\) is simply connected, therefore there exists \(F\in C^{\infty}(M)\) such that \(dF=-\ast\beta\). From \cite{dairbekov107} we know that the map \(I^{*}_{0}:C^{\infty}_{\alpha}(\partial_{+}(SM))\rightarrow C^{\infty}(M)\) defined by \(I^{*}_{0}(h):=2\pi(h_{\psi})_{0}\) is surjective. Therefore, there exists \(h\in C^{\infty}_{\alpha}(\partial_{+}(SM))\) such that \(2\pi(h_{\psi})_{0}=F\). Define \(h':=2\pi h\). Now,
\begin{eqnarray*}
S(h')= X_{\bot}(h'_{\psi})_{0} & = & \ast d((h'_{\psi})_{0}) \\
                               & = & \ast d(2\pi(h_{\psi})_{0}) \\
                               & = & \ast dF  \\
                               & = & \beta.
\end{eqnarray*}
Therefore, \(S\) is surjective. Note that to achieve the second equality we used the general fact that \(X_{\bot}f=\ast df\) for any \(f\in C^{\infty}(M)\), which in turn follows from a basic computation using the commutator \(X_{\bot}=[X,V]\).
\end{proof}

\begin{lemma} The map given by \(S':C^{\infty}_{\alpha}(\partial_{+}(SM))\rightarrow C^{\infty}(M)\) where \(S'(h):= (X_{\bot}h_{\psi})_{0}\) is surjective.\label{surj2}
\end{lemma}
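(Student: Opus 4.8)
The plan is to mirror the proof of Lemma~\ref{surj1}, but now invoking the surjectivity of the adjoint $I_1^*$ of the unattenuated magnetic ray transform on $1$-forms (established in the magnetic setting in \cite{dairbekov107}) in place of that of $I_0^*$. The essential new ingredient is an identity realising $S'$ as a constant multiple of a curl composed with $I_1^*$: I claim that for every $h\in C^\infty_\alpha(\partial_+(SM))$ one has
\[
(X_\bot h_\psi)_0 = c\,\ast d\bigl(I_1^* h\bigr)
\]
for a fixed nonzero constant $c$, where $I_1^* h$ is the $1$-form associated to the first Fourier modes of $h_\psi$. Granting this, surjectivity is immediate: given $f\in C^\infty(M)$, solve a Dirichlet problem for the Laplacian on $M$ (possible since $M$ is a topological ball) to produce $\phi$ with $\ast d(\ast d\phi)=f/c$, and set $\beta:=\ast d\phi$. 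Then $\beta$ is a smooth solenoidal $1$-form, i.e. $\beta\in C^\infty_\delta(M,\Lambda^1)$, with $\ast d\beta=f/c$. By the surjectivity of $I_1^*$ onto $C^\infty_\delta(M,\Lambda^1)$ there is an $h$ with $I_1^* h=\beta$, whence $S'(h)=c\,\ast d\beta=f$.

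To establish the displayed identity I would argue by Fourier modes in the fibre variable. Since $X_\bot$ maps $\Omega_k$ into $\Omega_{k-1}\oplus\Omega_{k+1}$, the zeroth mode $(X_\bot h_\psi)_0$ receives contributions only from $(h_\psi)_{\pm1}$; writing $X_\bot$ in terms of the raising and lowering operators $\eta_\pm=\tfrac12(X\pm iX_\bot)$, which send $\Omega_k\to\Omega_{k\pm1}$, one finds that $(X_\bot h_\psi)_0$ is a fixed constant combination of $\eta_+(h_\psi)_{-1}$ and $\eta_-(h_\psi)_1$. On the other hand, by Santaló's formula the adjoint $I_1^* h$ is precisely the $1$-form associated, up to the normalising constant $c$, to the first Fourier modes $(h_\psi)_1+(h_\psi)_{-1}$ of $h_\psi$. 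Combining these two facts with the standard identity $(X_\bot\hat w)_0=\ast dw$, valid for any $1$-form $w$ (the $1$-form analogue of the relation $X_\bot F=\ast dF$ used in Lemma~\ref{surj1}), yields the claim. As a consistency check one notes that $(Xh_\psi)_0=-(\lambda V h_\psi)_0=0$, because $\lambda\in\Omega_0$ and $V$ annihilates the zeroth mode; hence $\delta(I_1^* h)=-(X\widehat{I_1^* h})_0=0$, confirming that the range of $I_1^*$ lies in $C^\infty_\delta(M,\Lambda^1)$, in agreement with the cited surjectivity statement.

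The main obstacle is the key identity itself. Unlike in Lemma~\ref{surj1}, where $X_\bot$ is applied to the fibrewise-constant function $(h_\psi)_0$ and therefore reduces immediately to $\ast d$ acting on a function on $M$, here $X_\bot$ is applied \emph{before} the projection onto the zeroth mode; consequently one must carefully track the interaction of $X_\bot$ with the Fourier decomposition and correctly identify $I_1^* h$ with the $\pm1$ modes of $h_\psi$. One must also check that the magnetic term $\lambda V$ does not disturb the zeroth-mode computations, which it does not precisely because $\lambda$ is fibrewise constant. The remaining ingredients, namely solvability of the relevant Dirichlet problem and the fact that every smooth function on $M$ arises as a curl $\ast d\beta$ of a solenoidal $1$-form $\beta$, are routine on a simple, hence topologically trivial, surface.
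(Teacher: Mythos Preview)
Your approach is essentially the paper's own. The identity you isolate, $(X_\bot h_\psi)_0 = c\,\ast d(I_1^* h)$, is exactly the formula the paper invokes, written there as $(X_\bot(h_\psi))_0 = -\tfrac{1}{2\pi}\,\delta_\bot I_1^* h$; on a surface $\delta_\bot$ coincides with $\pm\ast d$ on $1$-forms, so the two statements are the same up to sign conventions. Likewise, your step of producing a solenoidal $\beta$ with $\ast d\beta = f/c$ via a Dirichlet problem is the content of the paper's appeal to Proposition~4.1 in \cite{pestov04}.

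One small overstatement to flag: the result you cite from \cite{dairbekov107} (Theorem~7.3 there) does not assert that $I_1^*$ surjects onto $C^\infty_\delta(M,\Lambda^1)$. What it gives is that every solenoidal $1$-form $v$ can be written as $I_1^* h + \nabla h'$ for some $h\in C^\infty_\alpha(\partial_+(SM))$ and some \emph{harmonic} $h'$. This is harmless for your argument, since $\ast d(\nabla h')=\ast d(dh')=0$, so the harmonic-gradient correction is annihilated by $\ast d$ and one still obtains $S'(h)=c\,\ast d(I_1^* h)=c\,\ast d\beta=f$. The paper's proof makes exactly this observation (the line $\delta_\bot\nabla h'=0$). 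With that adjustment your argument is complete and matches the paper's.
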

\begin{proof} Let \(f\in C^{\infty}(M)\), by Proposition 4.1 in \cite{pestov04} there exists \(v\in C^{\infty}_{\delta}(M,\Lambda^{1})\) such that \(\delta_{\bot}v=f\). To obtain \(\delta_{\bot}v\) one takes the vector field associated to \(v\) by the metric, rotates it by \(90\) degrees using the orientability, and then applies the divergence operator. Now by Theorem 7.3 in \cite{dairbekov107} there exists \(h\in C^{\infty}_{\alpha}(\partial_{+}(SM))\) and harmonic \(h'\in C^{\infty}(M)\) such that \(v=I^{*}_{1}h+\nabla h'.\) Now,
\begin{eqnarray*}
f=\delta_{\bot}v & = & \delta_{\bot}I^{*}_{1}h + \delta_{\bot}\nabla h' \\
                 & = & -\frac{1}{2\pi}\delta_{\bot}I^{*}_{1}(-2\pi h) \\
                 & = & (X_{\bot}((-2\pi h)_{\psi}))_{0}.
\end{eqnarray*}
Therefore \(S'\) is surjective.
\end{proof}

\begin{remark} The last equality in the above proof follows from the general formula: \((X_{\bot}(h_{\psi}))_{0}=-\frac{1}{2\pi}\delta_{\bot}I^{*}_{1}h\) where \(h\in C^{\infty}_{\alpha}(\partial_{+}(SM))\). This formula was implicitly used in the paper \cite{pestov04}, and explicitly written down in \cite{dairbekov107}. It follows by a routine computation.\\
\indent The result of the above lemma in the geodesic case (magnetic field equal to \(0\)) first appeared in \cite{salo11} using a lengthy pseudodifferential argument. We note that emulating the proof of Lemma~\ref{surj2} in the geodesic setting gives a much simpler way of obtaining the result from \cite{salo11}; in this case the surjectivity of \(I^{*}_{1}\) is given by Theorem 4.2 in \cite{pestov04}, and one does not need to invoke any magnetic results from \cite{dairbekov107}.
\end{remark}

\begin{remark} Upon inspection of the proofs of the previous two lemmas, and the statement of Theorem 7.3 in \cite{dairbekov107} which was required therein, one notices that a stronger result holds. Indeed, given \(f=f'+f''\) where \(f\in C^{\infty}(SM),\ f'\in C^{\infty}(M),\ f''\in C^{\infty}_{\delta}(M, \Lambda^{1})\), then there exists \(h\in C^{\infty}_{\alpha}(\partial_{+}(SM))\) such that both \(S'(h)=f'\) and \(S(h)=f''\).
\label{doublesurj}
\end{remark}

\begin{lemma} Let \(f\in C^{\infty}(SM)\) be the sum of a function on \(M\) and a 1-form on \(M\). If \(f\) can be written as \[f=iX_{\bot}((h_{\psi})_{0})+i(X_{\bot}(h_{\psi}))_{0}\] for some \(h\in C^{\infty}_{\alpha}(\partial_{+}(SM))\), then there exists a holomorphic \(\omega\in C^{\infty}(SM)\) such that \((X+\lambda V)\omega=-f\).
\end{lemma}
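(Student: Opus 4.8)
The goal is to construct a holomorphic function $\omega$ with $(X+\lambda V)\omega = -f$, given that $f$ has the special form $f = iX_{\bot}((h_\psi)_0) + i(X_{\bot}(h_\psi))_0$ for some $h \in C^\infty_\alpha(\partial_+(SM))$. The natural candidate for $\omega$ is built directly from $h_\psi$, the constant-along-orbits extension. Since $h_\psi$ is constant along the flow, $(X+\lambda V)h_\psi = 0$, so $h_\psi$ is in the kernel of the transport operator. This suggests that $\omega$ should be obtained by applying the Hilbert transform machinery to $h_\psi$: I would set $\omega := (\operatorname{Id} - i\mathcal{H})(h_\psi) - (h_\psi)_0$, or equivalently take a suitable holomorphic projection of $h_\psi$. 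The point of subtracting off the appropriate zeroth component is to land in the holomorphic class while arranging the transport equation to produce exactly $-f$.

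First I would record that $(X+\lambda V)h_\psi = 0$ by definition of $h_\psi$. Next, I would apply the commutator formula from Proposition~\ref{commutatorformula}, specialized to the case $A = \Phi = 0$ (the unattenuated setting relevant to this lemma), which gives
\[
[\mathcal{H}, X+\lambda V]\,h_\psi = X_{\bot}(h_\psi)_0 + \{X_{\bot}h_\psi\}_0.
\]
Applying $X+\lambda V$ to the proposed $\omega$ and using $(X+\lambda V)h_\psi = 0$, the transport equation $(X+\lambda V)\omega = -f$ should reduce, after multiplying by $i$ and invoking the commutator identity, precisely to the hypothesized expression $f = iX_{\bot}((h_\psi)_0) + i(X_{\bot}(h_\psi))_0$. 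The algebra here is the routine bookkeeping of moving $X+\lambda V$ past $\mathcal{H}$ and tracking the zeroth Fourier mode.

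The key verification is that the constructed $\omega$ is genuinely holomorphic, i.e. that $(\operatorname{Id}-i\mathcal{H})\omega = \omega_0$, equivalently $\omega_k = 0$ for all $k < 0$. This follows from the fact that $(\operatorname{Id}-i\mathcal{H})$ projects onto the nonnegative Fourier modes (as displayed in the excerpt, $(\operatorname{Id}-i\mathcal{H})u = u_0 + 2\sum_{k \le -1}u_k$, so the holomorphic projection uses $\operatorname{Id}+i\mathcal{H}$), so I must take care to use the correct sign convention and subtract the correct copy of the zeroth mode so that $\omega$ has only nonnegative frequencies. I would confirm smoothness of $\omega$ from the smoothness of $h_\psi \in C^\infty(SM)$ guaranteed by the definition of $C^\infty_\alpha(\partial_+(SM))$, together with the fact that $\mathcal{H}$ preserves $C^\infty(SM)$.

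**The main obstacle.** The delicate step is pinning down the exact linear combination of $h_\psi$, its Hilbert transform, and its zeroth mode so that simultaneously (i) $\omega$ is holomorphic and (ii) the commutator formula reproduces $f$ with the correct factor of $i$ and no leftover terms. The hypothesized decomposition of $f$ into the two pieces $iX_{\bot}((h_\psi)_0)$ and $i(X_{\bot}(h_\psi))_0$ is precisely engineered to match the two terms the commutator formula outputs, so the real work is ensuring the sign conventions in the definition of $\mathcal{H}$ and in the holomorphic/antiholomorphic dichotomy line up, and that the projection indeed kills all negative modes. Once the candidate $\omega$ is correctly normalized, both properties should fall out of Proposition~\ref{commutatorformula} and the Fourier-mode description of $\operatorname{Id} \pm i\mathcal{H}$.
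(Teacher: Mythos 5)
Your proposal is essentially the paper's own proof: take $\tilde{\omega}:=h_{\psi}$, which satisfies $(X+\lambda V)\tilde{\omega}=0$, apply the holomorphic projection, and let Proposition~\ref{commutatorformula} (with $A=\Phi=0$, using $[\mathcal{H},\lambda V]=0$) produce exactly the two terms of $f$. The normalization you leave open resolves as $\omega:=(\operatorname{Id}+i\mathcal{H})h_{\psi}$ with \emph{no} zeroth-mode subtraction --- this is already holomorphic since it has only nonnegative Fourier modes, and subtracting $(h_{\psi})_{0}$ would spoil the identity by introducing an extra $X((h_{\psi})_{0})$ term.
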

\begin{proof} Define \(\tilde{\omega}:=h_{\psi}\). Therefore, \(\tilde{\omega}\in C^{\infty}(SM)\) and, \[(X+\lambda V)\tilde{\omega}=0.\] Now define \(\omega:=(\operatorname{Id}+i\mathcal{H})\tilde{\omega}\) so that \(\omega\in C^{\infty}(SM)\) is holomorphic. Now,
\begin{eqnarray*}
(X+\lambda V)\omega & = & (\operatorname{Id}+i\mathcal{H})(X+\lambda V)\tilde{\omega} - i[\mathcal{H},X]\tilde{\omega} \\ 
& = & - iX_{\bot}(\tilde{\omega}_{0}) - i(X_{\bot}\tilde{\omega})_{0}  \\
& = & - iX_{\bot}((h_{\psi})_{0}) - i(X_{\bot}(h_{\psi}))_{0} \\
& = & -f
\end{eqnarray*}
\end{proof}

\begin{theorem} Suppose \(f\in C^{\infty}(SM)\) is given by \(f=f'+f''\) where \(f'\in C^{\infty}(M)\) and \(f''\) is a 1-form on \(M\). Then there exists a holomorphic \(\omega\in C^{\infty}(SM)\) such that \((X+\lambda V)\omega=-f\).\label{thetheorem}
\end{theorem}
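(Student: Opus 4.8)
The plan is to reduce the statement to the preceding lemma, which produces a holomorphic integrating factor for any $g\in C^{\infty}(SM)$ of the special form $g=iX_{\bot}((h_{\psi})_{0})+i(X_{\bot}(h_{\psi}))_{0}$, and to supply the required $h$ using the double surjectivity recorded in Remark~\ref{doublesurj}. The immediate difficulty is a mismatch: in that special form the $1$-form contribution is $iX_{\bot}((h_{\psi})_{0})=i\ast d((h_{\psi})_{0})$, which is always \emph{coclosed}, whereas $f''$ is an arbitrary $1$-form. Remark~\ref{doublesurj} only prescribes the coclosed part of a $1$-form through $S$, so the exact part of $f''$ cannot be absorbed this way and must be dealt with separately.

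First I would peel off the exact part. Using Hodge theory on the topologically trivial $M$ --- concretely, solving the Dirichlet problem $\delta d p=\delta f''$ with $p|_{\partial M}=0$ --- I would write $f''=dp+\beta$ with $p\in C^{\infty}(M)$ and $\beta:=f''-dp$ satisfying $\delta\beta=0$, so that $\beta\in C^{\infty}_{\delta}(M,\Lambda^{1})$. The exact piece then has a \emph{trivial} holomorphic integrating factor: since $p$ is constant on the fibres of $SM$ we have $Vp=0$, hence $(X+\lambda V)p=Xp$, and $Xp$ is precisely the $1$-form $dp$ viewed as a function on $SM$. Thus $\omega_{1}:=-p$ solves $(X+\lambda V)\omega_{1}=-dp$, and since $\omega_{1}\in\Omega_{0}$ it is automatically holomorphic.

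It remains to treat $g:=f'+\beta$, a sum of a function on $M$ and a \emph{coclosed} $1$-form. Here Remark~\ref{doublesurj} applies: since $-if'\in C^{\infty}(M)$ and $-i\beta\in C^{\infty}_{\delta}(M,\Lambda^{1})$, there is $h\in C^{\infty}_{\alpha}(\partial_{+}(SM))$ with $S'(h)=-if'$ and $S(h)=-i\beta$, i.e.\ $i(X_{\bot}(h_{\psi}))_{0}=f'$ and $iX_{\bot}((h_{\psi})_{0})=\beta$. Then $g=iX_{\bot}((h_{\psi})_{0})+i(X_{\bot}(h_{\psi}))_{0}$, and the preceding lemma furnishes a holomorphic $\omega_{2}$ with $(X+\lambda V)\omega_{2}=-g$. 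Setting $\omega:=\omega_{1}+\omega_{2}$ gives a holomorphic function with $(X+\lambda V)\omega=-(dp)-(f'+\beta)=-f$, as desired.

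I expect the only genuine obstacle to be the observation in the first step --- recognizing that the surjectivity results cover only coclosed $1$-forms, and that the complementary exact part is handled for free because $(X+\lambda V)$ reduces to $X$ on fibre-constant functions and $Xp$ represents $dp$ on $SM$. Once this splitting is in place, the remaining steps are direct applications of the cited lemma and remark.
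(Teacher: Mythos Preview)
Your proposal is correct and essentially identical to the paper's proof: both split $f''$ into a solenoidal part $\beta$ (the paper's $\alpha^{s}$) and an exact part $dp$, handle $f'+\beta$ via Remark~\ref{doublesurj} together with the preceding lemma, and absorb $dp$ by subtracting the fibre-constant (hence holomorphic) function $p$ using $(X+\lambda V)p=Xp=dp$. The only cosmetic difference is the order of presentation---the paper treats the solenoidal case first and then reduces the general case to it, whereas you peel off $dp$ first.
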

\begin{proof} Let us first suppose that \(f''\) is solenoidal. By the above lemma it is sufficient to find \(h\in C^{\infty}_{\alpha}(\partial_{+}(SM))\) such that \[iX_{\bot}((h_{\psi})_{0})+i(X_{\bot}(h_{\psi}))_{0} = f'+f''.\] Using Lemma~\ref{surj1}, Lemma~\ref{surj2}, and Remark~\ref{doublesurj},  allows us to conclude, since the right hand side is the sum of a function and a solenoidal 1-form. \\
\indent For a general \(f''\) decompose it as follows: \(f''=\alpha^{s}+dp\) where \(\alpha^{s}\) is a solenoidal 1-form and \(p\in C^{\infty}(M)\). Now \[ (X+\lambda V)\omega = -f \ \ \Leftrightarrow\ \  (X+\lambda V)\omega' = -f' -\alpha^{s}\] where \(\omega'=\omega+p\). This last expression has a holomorphic solution \(\omega'\in C^{\infty}(SM)\) from above. Therefore \(\omega:=\omega'-p\in C^{\infty}(SM)\) is holomorphic and satisfies \((X+\lambda V)\omega=-f\).    
\end{proof}
Similar proofs yield entirely analogous results in the antiholomorphic case.

\begin{lemma} Let \(f\in C^{\infty}(SM)\) be the sum of a function on \(M\) and a 1-form on \(M\). If \(f\) can be written as \[f=iX_{\bot}((h_{\phi})_{0})+i(X_{\bot}(h_{\phi}))_{0}\] for some \(h\in C^{\infty}_{\alpha}(\partial_{+}(SM))\), then there exists an antiholomorphic \(\omega\in C^{\infty}(SM)\) such that \((X+\lambda V)\omega=-f\).
\end{lemma}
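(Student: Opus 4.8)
The plan is to mirror the proof of the holomorphic lemma above, replacing the holomorphic projector \(\operatorname{Id}+i\mathcal{H}\) by its antiholomorphic counterpart \(\operatorname{Id}-i\mathcal{H}\). First I would set \(\tilde{\omega}:=h_{\phi}\), the extension of \(h\) that is constant along the orbits of the magnetic flow; by construction this lies in \(C^{\infty}(SM)\) and satisfies \((X+\lambda V)\tilde{\omega}=0\). I would then define \(\omega\) to be (up to a sign, see below) the antiholomorphic projection \((\operatorname{Id}-i\mathcal{H})\tilde{\omega}\), which automatically has vanishing positive Fourier modes and is therefore antiholomorphic in the sense of the definition above.

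The computation of \((X+\lambda V)\omega\) proceeds exactly as before. One commutes \(X+\lambda V\) past \(\mathcal{H}\) using Proposition~\ref{commutatorformula} with \(A=0\) and \(\Phi=0\), which gives \([\mathcal{H},X+\lambda V]\tilde{\omega}=X_{\bot}\tilde{\omega}_{0}+(X_{\bot}\tilde{\omega})_{0}\). Since \((X+\lambda V)\tilde{\omega}=0\), the only surviving contribution is this commutator term, and substituting \(\tilde{\omega}=h_{\phi}\) reproduces precisely the expression \(iX_{\bot}((h_{\phi})_{0})+i(X_{\bot}(h_{\phi}))_{0}\) appearing in the hypothesis on \(f\). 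No new analytic input beyond the commutator formula is required, so the argument is essentially bookkeeping once the Fourier-mode projections are handled.

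The one point that requires care — and the only genuine difference from the holomorphic case — is the sign. Passing from \(\operatorname{Id}+i\mathcal{H}\) to \(\operatorname{Id}-i\mathcal{H}\) flips the sign of the commutator contribution, so the naive choice \(\omega=(\operatorname{Id}-i\mathcal{H})\tilde{\omega}\) would yield \((X+\lambda V)\omega=+f\) rather than \(-f\). I would therefore absorb a compensating minus sign into the definition, taking \(\tilde{\omega}:=-h_{\phi}\) (equivalently \(\omega:=-(\operatorname{Id}-i\mathcal{H})h_{\phi}\)); the result is still antiholomorphic and now satisfies \((X+\lambda V)\omega=-f\), as required. I expect this sign tracking to be the only potential pitfall, the rest of the argument being a direct transcription of the holomorphic lemma.
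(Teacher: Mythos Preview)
Your proposal is correct and is exactly the argument the paper has in mind; the paper does not spell out a proof of this lemma but simply remarks that ``similar proofs yield entirely analogous results in the antiholomorphic case,'' and your transcription of the holomorphic proof with \(\operatorname{Id}-i\mathcal{H}\) in place of \(\operatorname{Id}+i\mathcal{H}\) is precisely that. Your sign observation is also correct: the commutator term changes sign, so one must take \(\omega=-(\operatorname{Id}-i\mathcal{H})h_{\phi}\) rather than \(+(\operatorname{Id}-i\mathcal{H})h_{\phi}\) to obtain \((X+\lambda V)\omega=-f\).
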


\begin{theorem} Suppose \(f\in C^{\infty}(SM)\) is given by \(f=f'+f''\) where \(f'\in C^{\infty}(M)\) and \(f''\) is a 1-form on \(M\). Then there exists an antiholomorphic \(\omega\in C^{\infty}(SM)\) such that \((X+\lambda V)\omega=-f\).
\end{theorem}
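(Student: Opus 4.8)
The plan is to mirror the proof of Theorem~\ref{thetheorem} step for step, replacing each holomorphic ingredient by its antiholomorphic counterpart. The projection \((\operatorname{Id}+i\mathcal{H})\) is replaced throughout by \((\operatorname{Id}-i\mathcal{H})\), the forward orbit extension \(h_{\psi}\) by the extension \(h_{\phi}\) appearing in the antiholomorphic Lemma stated just above, and that Lemma is invoked in place of its holomorphic sibling. The overall skeleton — first dispose of the solenoidal case, then reduce the general case to it by a potential-term adjustment — carries over without structural change.

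First I would treat the case in which the 1-form \(f''\) is solenoidal. By the antiholomorphic Lemma immediately preceding the theorem, it suffices to produce \(h\in C^{\infty}_{\alpha}(\partial_{+}(SM))\) with
\[iX_{\bot}((h_{\phi})_{0})+i(X_{\bot}(h_{\phi}))_{0}=f'+f''.\]
Since \(f'\) is a function on \(M\) and \(f''\) is a solenoidal 1-form, this is exactly the kind of right-hand side handled by the surjectivity machinery. Concretely, I would establish antiholomorphic analogues of Lemma~\ref{surj1}, Lemma~\ref{surj2} and Remark~\ref{doublesurj}, with \(h_{\phi}\) in place of \(h_{\psi}\), and then apply the combined double-surjectivity statement to split the target \(f'+f''\) into its function part and its solenoidal part, realizing both simultaneously through a single \(h\).

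For a general \(f''\) I would use the Hodge-type decomposition \(f''=\alpha^{s}+dp\) with \(\alpha^{s}\) solenoidal and \(p\in C^{\infty}(M)\), exactly as in Theorem~\ref{thetheorem}. Setting \(\omega':=\omega+p\) converts the equation \((X+\lambda V)\omega=-f\) into \((X+\lambda V)\omega'=-f'-\alpha^{s}\), whose right-hand side is a function plus a solenoidal 1-form, so the solenoidal case just handled yields an antiholomorphic \(\omega'\). Since \(p\) is a degree-zero function on \(M\), it lies in \(\Omega_{0}\) and is therefore simultaneously holomorphic and antiholomorphic; hence \(\omega:=\omega'-p\) is again antiholomorphic and solves the original equation.

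The main obstacle is establishing the antiholomorphic surjectivity lemmas, i.e. replacing \(h_{\psi}\) by \(h_{\phi}\) in the arguments of Lemma~\ref{surj1} and Lemma~\ref{surj2}. These rest on the surjectivity of the adjoints \(I^{*}_{0}\) and \(I^{*}_{1}\) from \cite{dairbekov107}; the \(\phi\)-extension corresponds to the time-reversed magnetic flow, which is precisely the magnetic flow of \((M,g,-\alpha)\) (equivalently \(\Omega\mapsto-\Omega\)). Because reversing the sign of \(\alpha\) sends a simple magnetic system to a simple magnetic system, the adjoint surjectivity results of \cite{dairbekov107} apply verbatim to the reversed system, and the commutator identity of Proposition~\ref{commutatorformula} (with \(A=\Phi=0\)) produces the required \(X_{\bot}\)-terms with the sign dictated by \((\operatorname{Id}-i\mathcal{H})\). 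Once these analogues are in hand, the remaining verifications amount to routine sign bookkeeping.
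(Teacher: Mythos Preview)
Your overall plan is correct and matches the paper's approach, which is simply ``similar proofs yield entirely analogous results in the antiholomorphic case'': replace the projector \((\operatorname{Id}+i\mathcal{H})\) by \((\operatorname{Id}-i\mathcal{H})\), reduce first to the solenoidal case via the Hodge decomposition \(f''=\alpha^{s}+dp\), and appeal to the surjectivity machinery.

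However, the detour through the time-reversed magnetic system \((M,g,-\alpha)\) and the re-proof of the surjectivity lemmas with \(h_{\phi}\) in place of \(h_{\psi}\) is unnecessary. You can use the \emph{same} extension \(h_{\psi}\) and the \emph{same} Lemma~\ref{surj1}, Lemma~\ref{surj2}, and Remark~\ref{doublesurj}. Indeed, set \(\tilde{\omega}:=h_{\psi}\) so that \((X+\lambda V)\tilde{\omega}=0\), and define \(\omega:=(\operatorname{Id}-i\mathcal{H})\tilde{\omega}\), which is antiholomorphic. The commutator computation gives
\[(X+\lambda V)\omega=(\operatorname{Id}-i\mathcal{H})(X+\lambda V)\tilde{\omega}+i[\mathcal{H},X+\lambda V]\tilde{\omega}=+iX_{\bot}((h_{\psi})_{0})+i(X_{\bot}h_{\psi})_{0},\]
i.e.\ the opposite sign to the holomorphic case. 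But since the maps \(S\) and \(S'\) of Lemmas~\ref{surj1}--\ref{surj2} are surjective onto \(C^{\infty}_{\delta}(M,\Lambda^{1})\) and \(C^{\infty}(M)\) respectively, they hit \(+if''\) and \(+if'\) just as well as \(-if''\) and \(-if'\); the sign is absorbed by choosing \(h\) appropriately. No appeal to the reversed flow or to new adjoint-surjectivity results is needed.
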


\section{Regularity of the Solution of the Transport Equation}
\indent \indent Recall that given a smooth \(f:SM\rightarrow\mathbb{C}^{n}\) we denote by \(u^{f}\) the unique solution to \[(X+\lambda V)u+\mathcal{A}u =-f,\ \ u|_{\partial_{-}(SM)}=0.\] Note that \(u^{f}\) may fail to be smooth everywhere in general, however, in this section we will closely emulate the results in \cite{paternain111} in order to obtain that \(u^{f}\) is smooth whenever \(f\) is in the kernel of the magnetic ray transform. First we need some preliminaries.\\
\begin{definition} We define the scattering relation \(\mathcal{S}:\partial_{+}(SM)\rightarrow\partial_{-}(SM)\) of a fixed magnetic system \((M,g,\alpha)\) as follows: \[\mathcal{S}(x,v):=(\gamma_{x,v}(\tau(x,v)),\dot{\gamma}_{x,v}(\tau(x,v))).\]
\end{definition}
Here \(\gamma_{x,v}\) is the unique magnetic geodesic determined by \((x,v)\in\partial_{+}(SM).\) Now given \(w\in C^{\infty}(\partial_{+}(SM),\mathbb{C}^{n})\) consider the unique solution \(\overline{w}:SM\rightarrow\mathbb{C}^{n}\) to the transport equation: \[(X+\lambda V)\overline{w}+\mathcal{A}\overline{w}=0,\ \ \overline{w}|_{\partial_{+}(SM)}=w.\]
Observe that \[\overline{w}(x,v)=U_{-}(x,v)w(\mathcal{S}^{-1}\circ\varphi_{\tau(x,v)}(x,v)).\] Now introduce \[Q:C(\partial_{+}(SM),\mathbb{C}^{n})\rightarrow C(\partial(SM),\mathbb{C}^{n})\] by setting 
\[
 (Qw)(x,v) =
  \begin{cases}
   w(x,v) & \text{if } (x,v)\in\partial_{+}(SM) \\
      C^{\mathcal{A}}_{+}(x,v)(w\circ\mathcal{S}^{-1})(x,v)    & \text{if } (x,v)\in\partial_{-}(SM)
  \end{cases}
\]
then
\[\overline{w}|_{\partial(SM)}=Qw.\]
Define \(\mathcal{S}^{\infty}(\partial_{+}(SM),\mathbb{C}^{n}):=\left\{w\in C^{\infty}(\partial_{+}(SM),\mathbb{C}^{n}):\ \ \overline{w}\in C^{\infty}(SM,\mathbb{C}^{n})\right\}.\) We characterize this space as follows:
\begin{lemma} \(\mathcal{S}^{\infty}(\partial_{+}(SM),\mathbb{C}^{n}) =\left\{w\in C^{\infty}(\partial_{+}(SM),\mathbb{C}^{n}):\ \ Qw\in C^{\infty}(\partial(SM),\mathbb{C}^{n})\right\}.\) \label{anotherlemma}
\end{lemma}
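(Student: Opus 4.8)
The statement is an equality of two sets, and I would prove the two inclusions separately; the inclusion $\mathcal{S}^{\infty}\subseteq\{w:Qw\in C^{\infty}\}$ is immediate, while the reverse inclusion carries all the content. For the easy direction: if $w\in\mathcal{S}^{\infty}$ then by definition $\overline{w}\in C^{\infty}(SM)$, and since $\overline{w}|_{\partial(SM)}=Qw$ and $\partial(SM)$ is a smooth hypersurface of $SM$, restriction gives $Qw\in C^{\infty}(\partial(SM))$. So the whole task reduces to showing that smoothness of the boundary trace forces smoothness of the interior solution $\overline{w}$.

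For the reverse inclusion, suppose $Qw\in C^{\infty}(\partial(SM))$. My first step would be to reduce to an inhomogeneous transport problem with vanishing full trace: choose a smooth extension $q\in C^{\infty}(SM)$ of the hypersurface function $Qw$ (a collar/Seeley-type extension), and set $E:=\overline{w}-q$. Then $E|_{\partial(SM)}=0$ and $(X+\lambda V+\mathcal{A})E=-S$ with $S:=(X+\lambda V+\mathcal{A})q\in C^{\infty}(SM)$. Since $E$ vanishes on $\partial_{-}(SM)$ it coincides with the canonical solution $u^{S}$, and since it also vanishes on $\partial_{+}(SM)$ we in fact have $I_{\mathcal{A}}S=0$; either way, it suffices to prove that $u^{S}$ is smooth. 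Next I would enlarge the geometry: embed $(M,g,\alpha)$ in a simple magnetic system $(M_{1},g,\alpha)$ with $M\Subset M_{1}$, extending $g$, $\alpha$ (hence $\lambda$), $A$ and $\Phi$ smoothly across $\partial M$, and extend $S$ to some $\hat{S}\in C^{\infty}(SM_{1})$. Representing $E$ by integrating the transport ODE along orbits (through $U_{-}$ and the source integral over the orbit segment inside $M$), one sees directly that $E$ is smooth away from the glancing set, because there the entry-point map, the exit time $\tau$, and the scattering relation $\mathcal{S}$ are all smooth, the magnetic exponential map being $C^{\infty}$ off the zero section.

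The genuine obstacle is regularity at the glancing region $\partial_{0}(SM):=\{(x,v)\in\partial(SM):\langle v,\nu(x)\rangle=0\}$ and along the orbits issuing from it. There $\tau$ and $\mathcal{S}$ exhibit square-root type behaviour and are not smooth, so the orbit-integral representation does not by itself deliver smoothness. The resolution is that the trace $Qw$ equals $w$ on $\partial_{+}(SM)$ and $C^{\mathcal{A}}_{+}(w\circ\mathcal{S}^{-1})$ on $\partial_{-}(SM)$, and the hypothesis that these two pieces assemble into a \emph{single} smooth function across $\partial_{0}(SM)$ is precisely the compatibility condition that cancels the singular contributions of $\tau$ and $\mathcal{S}$. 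The plan is to transfer the problem to $SM_{1}$, where $\partial M$ becomes an interior hypersurface and the orbits tangent to it are ordinary interior orbits of a simple system, and then to combine the $C^{1}$-regularity of the magnetic exponential map from \cite{dairbekov107} with the smoothness of $Qw$ across $\partial_{0}(SM)$ to conclude that $\overline{w}$ extends to a smooth function on a neighbourhood of $SM$ in $SM_{1}$, hence is smooth up to and including $\partial(SM)$.

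I expect this last step, the regularity analysis at the glancing set, to be the crux; everything else (the trivial inclusion, the reduction to zero trace, the extension of data, and smoothness off the glancing region) is routine. I anticipate following \cite{paternain111} essentially verbatim here, the only new feature being the magnetic term $\lambda V$, which by Proposition~\ref{commutatorformula} commutes with $\mathcal{H}$ and does not disturb the flow-regularity estimates underlying the argument.
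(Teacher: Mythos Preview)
Your outline diverges substantially from the paper's proof, and the point at which you defer to \cite{paternain111} is precisely where your argument has a gap.

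The paper does \emph{not} perform a direct glancing-set analysis. Instead it reduces the attenuated homogeneous problem to the \emph{unattenuated} one by manufacturing a smooth matrix integrating factor. Concretely: one enlarges $M$ to a simple magnetic system $\overline{U}$, solves $(X+\lambda V+\mathcal{A})R=0$ on $S\overline{U}$ with $R|_{\partial_{+}(S\overline{U})}=\operatorname{Id}$, and restricts $R$ to $SM$. Because $SM$ lies in the interior of $S\overline{U}$, this $R$ is automatically smooth and invertible on $SM$. One then checks the factorisation $\overline{w}=R\,(pw)_{\psi}$ with $p:=R^{-1}|_{\partial_{+}(SM)}$, where $(\cdot)_{\psi}$ denotes the solution of the \emph{unattenuated} transport equation $(X+\lambda V)u=0$ with prescribed $\partial_{+}$ data, and correspondingly $Qw=R\,E(pw)$ for the unattenuated extension operator $E$. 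The equivalence $\overline{w}\in C^{\infty}\Leftrightarrow Qw\in C^{\infty}$ is then immediate from the known unattenuated statement ``$s_{\psi}$ smooth $\Leftrightarrow Es$ smooth'' (Lemma~7.6 of \cite{dairbekov107}), together with smoothness and invertibility of $R$. All the delicate glancing-set regularity is thus outsourced to the already-established unattenuated result; nothing new is proved there.

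Your route instead reduces to showing that $u^{S}$ is smooth when $I_{\mathcal{A}}S=0$, and then proposes to attack the glancing region directly. Two problems. First, the statement ``$I_{\mathcal{A}}S=0\Rightarrow u^{S}\in C^{\infty}$'' is exactly Proposition~\ref{theproposition}, whose proof in the paper \emph{uses} the present lemma; so invoking it, even implicitly, is circular, and you must indeed produce a self-contained argument. Second, you do not produce one: you say you will follow \cite{paternain111} ``essentially verbatim'', but the corresponding lemma in \cite{paternain111} is proved by the same integrating-factor trick just described (with the geodesic analogue of Lemma~7.6 of \cite{dairbekov107} supplying the unattenuated input), not by any direct analysis of the square-root singularities of $\tau$ and $\mathcal{S}$. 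So the reference does not deliver what you need, and the glancing step remains open in your plan. (Your closing remark about $[\mathcal{H},\lambda V]=0$ is also beside the point here; the Hilbert transform plays no role in this lemma.)

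The fix is simple: drop the zero-trace reduction and the source-integral representation, and instead carry out the integrating-factor reduction to the unattenuated problem, citing Lemma~7.6 of \cite{dairbekov107} for the latter.
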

\begin{proof} Consider the transport equation where \(\mathcal{A}=0\), \[(X+\lambda V)u=0,\ \ u|_{\partial_{+}(SM)}=s.\] The solution of this will be denoted \[u=s_{\psi}.\] Now define the extension operator \(E:C(\partial_{+}(SM),\mathbb{C}^{n})\rightarrow C(\partial(SM),\mathbb{C}^{n}),\) 
\[
 (Es)(x,v) =
  \begin{cases}
   s(x,v) & \text{if } (x,v)\in\partial_{+}(SM) \\
      s\circ\mathcal{S}^{-1}(x,v) & \text{if } (x,v)\in\partial_{-}(SM)
  \end{cases}
\]
then \[s_{\psi}|_{\partial(SM)}=Es.\] Lemma 7.6 in \cite{dairbekov107} states that \(s_{\psi}\) is smooth iff \(Es\) is smooth. Now we use the methodology described in the proof of Theorem 7.3 in \cite{dairbekov107}. Embed \(M\) into a closed manifold \(\tilde{M}\), and smoothly extend \(g\) and \(\alpha\) to a Riemannian metric and 1-form on \(\tilde{M}\) respectively, and extend \(\mathcal{A}\) to the unit sphere bundle of \(\tilde{M}\). We shall continue to denote the extensions \((g,\alpha, \mathcal{A}).\) Given \(U\subset\tilde{M}\) an open neighbourhood of \(M\) with smooth boundary, then \((\overline{U},g,\alpha)\) will also be a simple magnetic system provided that \(\partial U\) is sufficiently close to \(\partial M.\) We will assume such a \(U\) is fixed. Now consider the unique solution to the transport equation in \(S\overline{U}\):
\[(X+\lambda V)R+\mathcal{A}R=0,\ \ R|_{\partial_{+}(S\overline{U})}=\operatorname{Id}.\]
If we restrict \(R\) to \(SM\) we obtain a smooth map \(R:SM\rightarrow GL(n,\mathbb{C})\). We will also denote the restriction by \(R.\) Define \(p:=R^{-1}|_{\partial_{+}(SM)}\); note that by this we mean the restriction of \(R\) composed with matrix inversion. Then, \[\overline{w}=R(pw)_{\psi}.\] Also, 
\[
 (Qw)(x,v) =
  \begin{cases}
   R(x,v)p(x,v)w(x,v) & \text{if } (x,v)\in\partial_{+}(SM) \\
     R(x,v)((pw)\circ\mathcal{S}^{-1})(x,v) & \text{if } (x,v)\in\partial_{-}(SM)
  \end{cases}
\]
Now, if \(\overline{w}\) is smooth, then so is \(Qw=\overline{w}|_{\partial(SM)}\). Conversely, assume that \(Qw\) is smooth. Since, \(R\) is smooth, then \(Epw=R^{-1}Qw\) is also smooth. Thus, \((pw)_{\psi}\) is smooth, and so the smoothness of \(R\) guarantees the smoothness of \(\overline{w}.\)
\end{proof}

\begin{proposition} Let \(f:SM\rightarrow\mathbb{C}^{n}\) be a smooth function such that \(I_{\mathcal{A}}(f)=0\). Then \(u^{f}:SM\rightarrow\mathbb{C}^{n}\) is smooth. \label{theproposition}
\end{proposition}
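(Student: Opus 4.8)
The plan is to exploit the hypothesis \(I_{\mathcal{A}}(f)=0\) through the single clean consequence that \(u^{f}\) vanishes on \emph{all} of \(\partial(SM)\): indeed \(u^{f}|_{\partial_{-}(SM)}=0\) by the defining boundary condition of the transport equation, while \(u^{f}|_{\partial_{+}(SM)}=I_{\mathcal{A}}(f)=0\) by assumption. Now \(u^{f}\) is automatically smooth in the interior of \(SM\) (there the backward exit time is smooth, so the integral representation of \(u^{f}\) differentiates freely), so the entire difficulty is to propagate regularity across the glancing region, where \(\tau\) fails to be smooth. The strategy is to subtract off a manifestly smooth solution coming from the enlarged system \(\overline{U}\), thereby reducing the problem to the \emph{homogeneous} transport equation, and then to invoke the characterization of \(\mathcal{S}^{\infty}(\partial_{+}(SM),\mathbb{C}^{n})\) in Lemma~\ref{anotherlemma}.

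Concretely, I would first extend \(f\) to a smooth \(\tilde{f}\in C^{\infty}(S\overline{U},\mathbb{C}^{n})\) on the larger simple magnetic system \((\overline{U},g,\alpha)\) fixed in the proof of Lemma~\ref{anotherlemma}, and let \(\tilde{u}\) be the solution of \((X+\lambda V)\tilde{u}+\mathcal{A}\tilde{u}=-\tilde{f}\) on \(S\overline{U}\) with \(\tilde{u}|_{\partial_{-}(S\overline{U})}=0\). Since \(SM\) sits in the interior of \(S\overline{U}\), the solution \(\tilde{u}\) is smooth on a neighbourhood of \(SM\); in particular \(\tilde{u}|_{SM}\) and \(\tilde{u}|_{\partial(SM)}\) are smooth. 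Setting \(r:=(u^{f}-\tilde{u})|_{SM}\), the two inhomogeneous equations agree on \(SM\) (because \(\tilde{f}=f\) there), so \(r\) solves the homogeneous equation \((X+\lambda V)r+\mathcal{A}r=0\). By ODE uniqueness along each magnetic geodesic, \(r=\overline{w}\) with \(w:=r|_{\partial_{+}(SM)}=-\tilde{u}|_{\partial_{+}(SM)}\in C^{\infty}(\partial_{+}(SM),\mathbb{C}^{n})\).

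It then remains to show that \(r=\overline{w}\) is smooth, and here the setup has been arranged so that the decisive quantity is smooth almost for free. By definition \(\overline{w}|_{\partial(SM)}=Qw\), so \[Qw=r|_{\partial(SM)}=u^{f}|_{\partial(SM)}-\tilde{u}|_{\partial(SM)}=-\tilde{u}|_{\partial(SM)},\] using precisely that \(u^{f}|_{\partial(SM)}=0\); the right-hand side is smooth because \(\tilde{u}\) is. Lemma~\ref{anotherlemma} now yields \(w\in\mathcal{S}^{\infty}(\partial_{+}(SM),\mathbb{C}^{n})\), i.e. \(\overline{w}=r\in C^{\infty}(SM,\mathbb{C}^{n})\), whence \(u^{f}=r+\tilde{u}|_{SM}\) is smooth.

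I would flag the identification of \(r\) with \(\overline{w}\) together with the appeal to Lemma~\ref{anotherlemma} as the conceptual crux: the genuine obstacle is that \(u^{f}\) itself has no reason to lie in the image of the \(\overline{(\,\cdot\,)}\) construction with smooth boundary data, and the role of the auxiliary \(\tilde{u}\) is exactly to absorb the (smooth) interior behaviour and leave behind a homogeneous solution whose full boundary trace \(Qw\) is \emph{forced} to be smooth by the vanishing of \(u^{f}\) on \(\partial(SM)\). Everything else — the interior smoothness of \(\tilde{u}\) on a neighbourhood of \(SM\), and the ODE-uniqueness identification \(r=\overline{w}\) — is routine, so I do not expect any separate hard estimate beyond what Lemma~\ref{anotherlemma} already packages.
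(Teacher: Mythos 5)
Your argument is correct and is essentially the paper's own proof: both extend the data to a slightly larger simple magnetic system, restrict the (there smooth) solution to $SM$, observe that the difference with $u^{f}$ solves the homogeneous transport equation with smooth full boundary trace because $u^{f}$ vanishes on all of $\partial(SM)$, and conclude via Lemma~\ref{anotherlemma}. The only difference is a sign convention in how the difference function is defined, which is immaterial.
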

\begin{proof} We repeat the procedure described in the above lemma: embed \(M\) into a closed manifold \(\tilde{M}\), and smoothly extend \(g\) and \(\alpha\) to a Riemannian metric and 1-form on \(\tilde{M}\) respectively, and extend \(f\) and \(\mathcal{A}\) to the unit sphere bundle of \(\tilde{M}\). We shall continue to denote the extensions \((g,\alpha, f, \mathcal{A}).\) Given \(U\subset\tilde{M}\) an open neighbourhood of \(M\) with smooth boundary, then \((\overline{U},g,\alpha)\) will also be a simple magnetic system provided that \(\partial U\) is sufficiently close to \(\partial M.\)\\
\indent Now consider the unique solution to the following system in \(S\overline{U}\):  
\[(X+\lambda V)r+\mathcal{A}r=-f,\ \ r|_{\partial_{-}(S\overline{U})}=0.\]
Then, the restriction of \(r\) to \(SM\), still denoted by \(r\), is a smooth solution \(r:SM\rightarrow\mathbb{C}^{n}\) to \((X+\lambda V)r+\mathcal{A}r=-f.\) Note that \(r-u^{f}\) solves \((X+\lambda V+\mathcal{A})(r-u^{f})=0\). Therefore, defining \(w:=(r-u^{f})|_{\partial_{+}(SM)}\), ensures that \(\overline{w}=r-u^{f}\). Hence, \(u^{f}\) is smooth iff \(\overline{w}\) is smooth. Finally, since \(u^{f}|_{\partial_{+}(SM)}=I_{\mathcal{A}}(f)=0\), we know that \(Qw=\overline{w}|_{\partial(SM)}=r|_{\partial(SM)}\) which is smooth, and so by Lemma~\ref{anotherlemma} \(u^{f}\) is also smooth.
\end{proof}

\section{Injectivity of \(I_{\mathcal{A}}\)}
\indent \indent In this section we will prove Theorem~\ref{mainresult}. First we must establish certain identities. The proof of the next two lemmas is essentially identical to that given in \cite{paternain111} and will be omitted.
\begin{lemma}\label{thelemma} For any pair of smooth functions \( u,g: SM \rightarrow\mathbb{C}^{n} \) we have \[\left\langle V(u),g\right\rangle=-\left\langle u,V(g)\right\rangle.\] If, in addition, \(u |_{\partial(SM)}=0,\) then \[\left\langle Pu,g \right\rangle = -\left\langle u,Pg\right\rangle\] where \(P=X+\lambda V+ A+\Phi\) or \(X_{\bot}+\ast A.\) 
\end{lemma}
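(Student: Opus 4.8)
The plan is to establish both identities as instances of integration by parts on $SM$, using that the relevant vector fields are divergence-free with respect to $d\Sigma^{3}$ and that $A$, $\Phi$, and $\ast A$ are pointwise skew-Hermitian. The fundamental tool is the following: if $W$ is a smooth vector field on $SM$ with $\operatorname{div} W = 0$ (with respect to $d\Sigma^{3}$), then for any $\phi \in C^{\infty}(SM)$ one has $\int_{SM} W\phi \, d\Sigma^{3} = \int_{\partial(SM)} \phi \langle W, \nu_{SM}\rangle \, dS$, where $\nu_{SM}$ is the outer unit normal to $\partial(SM)$. I would record at the outset that each of $X$, $V$, and $X_{\bot}$ is divergence-free: $V$ generates the fibrewise circle action, which is an isometry of the metric defining $d\Sigma^{3}$, hence Killing and divergence-free; $X$ is divergence-free by Liouville's theorem for the geodesic flow; and $X_{\bot} = [X,V]$ is divergence-free because $\mathcal{L}_{[X,V]} d\Sigma^{3} = [\mathcal{L}_{X}, \mathcal{L}_{V}] d\Sigma^{3} = 0$, being the bracket of two volume-preserving fields.

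For the first identity I would apply this with $W = V$ and $\phi = \langle u, g\rangle_{\mathbb{C}^{n}}$. Since $V$ is a real derivation, $V\langle u,g\rangle_{\mathbb{C}^{n}} = \langle Vu, g\rangle_{\mathbb{C}^{n}} + \langle u, Vg\rangle_{\mathbb{C}^{n}}$; integrating and invoking the integration-by-parts formula gives $\langle Vu, g\rangle + \langle u, Vg\rangle = \int_{\partial(SM)} \langle u,g\rangle_{\mathbb{C}^{n}} \langle V, \nu_{SM}\rangle \, dS$. The crucial point is that this boundary term vanishes \emph{without any hypothesis on} $u$: the boundary $\partial(SM) = S(\partial M)$ is a union of full fibres, so $V$ is everywhere tangent to it and $\langle V, \nu_{SM}\rangle = 0$. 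This yields $\langle Vu, g\rangle = -\langle u, Vg\rangle$.

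For the second identity I would treat $P$ summand by summand. The terms $A$, $\Phi$ (and, in the second case, $\ast A$) are skew-Hermitian-valued, so $\langle A u, g\rangle_{\mathbb{C}^{n}} = -\langle u, Ag\rangle_{\mathbb{C}^{n}}$ pointwise, and likewise for $\Phi$ and $\ast A$; integrating gives skew-adjointness of these zeroth-order terms with no boundary contribution. For $\lambda V$ I would use that $\lambda$ is real and constant on fibres, so $V\lambda = 0$, and reduce to the first identity via $\langle \lambda V u, g\rangle = \langle Vu, \lambda g\rangle = -\langle u, V(\lambda g)\rangle = -\langle u, \lambda Vg\rangle$. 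The only terms requiring the hypothesis $u|_{\partial(SM)} = 0$ are the derivations $X$ and $X_{\bot}$: applying the integration-by-parts formula with $W = X$ (resp. $W = X_{\bot}$) and $\phi = \langle u,g\rangle_{\mathbb{C}^{n}}$, the boundary integral is $\int_{\partial(SM)} \langle u,g\rangle_{\mathbb{C}^{n}} \langle W, \nu_{SM}\rangle \, dS$, which vanishes because $\langle u,g\rangle_{\mathbb{C}^{n}} = 0$ on $\partial(SM)$. Summing the contributions establishes $\langle Pu, g\rangle = -\langle u, Pg\rangle$ in both cases.

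The step I expect to be the main obstacle is verifying and cleanly invoking the divergence-free property of $X_{\bot}$, and recognising that---unlike $V$---it need not be tangent to $\partial(SM)$, so that the boundary hypothesis $u|_{\partial(SM)} = 0$ is genuinely needed there. The algebra of the zeroth-order terms and the reduction of $\lambda V$ to the first identity are routine; the care lies in the boundary-term bookkeeping, which is precisely where the two identities differ, the first needing no condition on $u$ and the second needing $u$ to vanish on $\partial(SM)$.
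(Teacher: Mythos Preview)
Your proposal is correct and follows the standard integration-by-parts argument; the paper itself omits the proof, stating that it is ``essentially identical to that given in \cite{paternain111}'', which is precisely the approach you outline. Your careful distinction between the boundary behaviour of $V$ (tangent to $\partial(SM)$, so no hypothesis needed) versus $X$ and $X_{\bot}$ (where $u|_{\partial(SM)}=0$ is required) is exactly the point.
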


\begin{lemma} Assume \((X+\lambda V+A+\Phi)(u)=F+ \sigma\) where \(F:M\rightarrow\mathbb{C}^{n}\) is a smooth function and \(\sigma\) is a \(\mathbb{C}^{n}\)-valued 1-form. Then, \[|V[(X+\lambda V + A + \Phi)(u)]|^{2}-|(X+\lambda V + A+ \Phi)(u)|^{2}=-|F|^{2}\leq 0.\]\label{firstlemma}
\end{lemma}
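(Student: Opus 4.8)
The plan is to prove the identity by expanding \(w:=(X+\lambda V+A+\Phi)(u)=F+\sigma\) into Fourier modes for the splitting \(L^{2}(SM,\mathbb{C}^{n})=\bigoplus_{k}H_{k}\) and exploiting orthogonality; here \(|\cdot|\) denotes the \(L^{2}(SM)\)-norm associated to \(\langle\cdot,\cdot\rangle\). The decisive structural input is the Fourier degree of each summand: \(F\) is the pullback of a function on \(M\), hence lies in \(\Omega_{0}\); and \(\sigma\) is the pullback of a \(1\)-form on \(M\), hence is fibrewise linear in \(v\). Since \(\dim M=2\) the fibres of \(SM\) are circles, and a fibrewise-linear function has nonzero Fourier content only in degrees \(\pm1\). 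Thus \(\sigma=w_{-1}+w_{1}\) with \(w_{\pm1}\in\Omega_{\pm1}\), and \(w=w_{-1}+w_{0}+w_{1}\) with \(w_{0}=F\).

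With this decomposition in hand I would simply compute. Since \(-iV\) acts as \(k\operatorname{Id}\) on \(H_{k}\), we have \(Vw_{k}=ik\,w_{k}\); in particular \(VF=Vw_{0}=0\) and \(Vw=i(w_{1}-w_{-1})\). Because \(H_{1},H_{0},H_{-1}\) are mutually orthogonal for \(\langle\cdot,\cdot\rangle\), all cross terms vanish, giving \(|Vw|^{2}=|w_{1}|^{2}+|w_{-1}|^{2}\) and \(|w|^{2}=|w_{-1}|^{2}+|w_{0}|^{2}+|w_{1}|^{2}\). Subtracting yields \(|Vw|^{2}-|w|^{2}=-|w_{0}|^{2}=-|F|^{2}\), and \(-|F|^{2}\leq0\) is immediate.

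There is no genuine analytic obstacle here; the proof is a short Fourier computation once the two observations above are in place. The only point requiring care is that the claimed identity is an \(L^{2}\) statement, not a pointwise one: pointwise on a fibre \(|V\sigma|^{2}\) and \(|\sigma|^{2}\) differ (for \(\sigma\) corresponding to \(a\cos\theta+b\sin\theta\) they equal \(|{-a\sin\theta+b\cos\theta}|^{2}\) and \(|a\cos\theta+b\sin\theta|^{2}\), which disagree at \(\theta=0\)), and it is precisely the integration over each circle fibre---equivalently the orthogonality of the \(H_{k}\)---that restores the equality of their \(L^{2}\) norms and kills the cross term between \(F\) and \(\sigma\). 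This is also where the hypothesis \(\dim M=2\) enters, through the identification of \(1\)-forms with the degree-\(\pm1\) part of \(L^{2}(SM,\mathbb{C}^{n})\).
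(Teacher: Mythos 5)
Your proof is correct and is essentially the argument the paper has in mind: the paper omits the proof, deferring to \cite{paternain111}, and the proof there is exactly this Fourier-mode computation (a function plus a $1$-form lives in $H_{-1}\oplus H_{0}\oplus H_{1}$, $V$ kills the zero mode and preserves the norms of the $\pm1$ modes, and orthogonality of the $H_{k}$ does the rest). Your closing remark correctly identifies why the identity is an $L^{2}$ (fibre-integrated) statement rather than a pointwise one.
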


\begin{lemma} Let \((M,g,\alpha)\) be a simple magnetic system with \(\operatorname{dim}(M)=2.\) There exist smooth solutions \(u:SM\rightarrow\mathbb{R}\) to the magnetic Riccati equation: \[(X+\lambda V)(u)+u^{2}+K+X_{\bot}(\lambda)+\lambda^{2}=0.\]
\end{lemma}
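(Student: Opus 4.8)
The plan is to reduce the nonlinear Riccati equation to a scalar linear second-order equation along the magnetic flow and to solve the latter using the absence of conjugate points guaranteed by simplicity. Writing $G:=X+\lambda V$ for the generator of the magnetic flow and $q:=K+X_{\bot}(\lambda)+\lambda^{2}$, the substitution $u=G(a)/a$ turns the equation $G(u)+u^{2}+q=0$ into the linear equation $G^{2}(a)+q\,a=0$; conversely, the logarithmic derivative of any nowhere-vanishing solution $a$ solves the Riccati equation, and since $q$ is real this solution is real-valued. Restricted to a single orbit $t\mapsto\varphi_{t}(x,v)$, on which $G$ acts as $d/dt$, the linear equation is precisely the scalar magnetic Jacobi equation $\ddot{y}(t)+q(\varphi_{t}(x,v))\,y(t)=0$, with $q$ playing the role of a magnetic Gaussian curvature (this identification is a direct computation with the structure equations $[V,X_{\bot}]=X$ and $[X,X_{\bot}]=-KV$, and is consistent with \cite{dairbekov107}). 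It therefore suffices to produce, along every orbit meeting $SM$, a solution $y$ of this Jacobi equation that is strictly positive on the whole portion of the orbit lying over $M$, and to check that the function $u(x,v):=\dot{y}(0)/y(0)$ so obtained is smooth on $SM$.

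To construct $y$ I would first enlarge the manifold, exactly as in the proofs of Lemma~\ref{anotherlemma} and Proposition~\ref{theproposition}: embed $M$ into a slightly larger simple magnetic system $(M_{1},g,\alpha)$ with $\partial M_{1}$ close to $\partial M$, so that every orbit through a point of $SM$ extends backward to an entry point $p=\varphi_{-\tau}(x,v)\in\partial_{+}(SM_{1})$ with $\tau>0$ and a definite amount of room before it enters $M$. Along each such orbit let $y$ be the solution of $\ddot{y}+q\,y=0$ with initial data $y(-\tau)=0$, $\dot{y}(-\tau)=1$ prescribed at $p$. Because $p$ depends only on the orbit and not on the particular representative $(x,v)$, the value $u(x,v)=\dot{y}(0)/y(0)$ is a well-defined function on $SM$; the cocycle computation $G(u)(\varphi_{s})=\tfrac{d}{ds}\big(\dot{y}(s)/y(s)\big)=\ddot{y}(s)/y(s)-\big(\dot{y}(s)/y(s)\big)^{2}=-q(\varphi_{s})-u(\varphi_{s})^{2}$ then shows that $u$ satisfies the Riccati equation at every point of the orbit.

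The positivity of $y$ on $M$ is where simplicity enters. Simplicity of $(M_{1},g,\alpha)$ means $\text{exp}^{\mu}_{p}$ is a $C^{1}$-diffeomorphism, and the scalar Jacobi field vanishing at $p$ is, up to scaling, the norm of the normal component of $d\,\text{exp}^{\mu}_{p}$; nondegeneracy of this differential away from the origin is exactly the statement that such a Jacobi field has no further zero, i.e. there are no magnetic conjugate points. Hence the solution with $y(-\tau)=0$, $\dot{y}(-\tau)=1>0$ stays strictly positive for all $t>-\tau$ up to exit, in particular on the whole of the orbit lying over $M$, so that $y(0)>0$ and $u$ is well-defined. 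Smoothness of $u$ on $SM$ then follows from smooth dependence of the entry point and time on $(x,v)$ (using strict magnetic convexity and transversality at $\partial M_{1}$), smooth dependence of solutions of the linear Jacobi ODE on parameters and initial data, and the strict positivity of $y(0)$, which keeps the quotient $\dot{y}(0)/y(0)$ smooth.

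The hard part will be the step linking simplicity to the absence of conjugate points, and with it the strict positivity of the Jacobi field: one must carefully transfer the no-conjugate-point property through the magnetic exponential map, bearing in mind that $\text{exp}^{\mu}_{p}$ is only $C^{1}$ at the origin and that the magnetic flow is not time-reversible, so the usual geodesic arguments (Sturm comparison combined with reversibility) must be adapted. A secondary, purely computational obstacle is verifying that the modified curvature appearing in the scalar magnetic Jacobi equation is exactly $q=K+X_{\bot}(\lambda)+\lambda^{2}$; this I would obtain from a direct calculation with the commutator relations, consistent with the magnetic Jacobi equation recorded in \cite{dairbekov107}.
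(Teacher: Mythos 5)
Your proposal is correct and follows the same core strategy as the paper: reduce the Riccati equation to the scalar magnetic Jacobi equation via the logarithmic derivative of a nowhere-vanishing solution, invoke the absence of magnetic conjugate points (a consequence of simplicity, as in \cite{herreros10}) to rule out further zeros, and pass to a slightly larger simple magnetic system to obtain smoothness on all of \(SM\), including over \(S(\partial M)\). The one place you genuinely diverge is in handling the initial zero of the Jacobi field. The paper starts its solution \(y\) at the entry point into \(M\) itself, so \(y(0)=0\), and must repair this by forming \(z=cy+w\) with a second solution \(w\) satisfying \(w(0)=1\), \(\dot w(0)=0\), choosing \(c>0\) uniformly over all magnetic geodesics by a compactness argument. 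You instead place the zero of \(y\) at the entry point into the enlarged manifold \(M_{1}\), strictly before the orbit reaches \(M\); the no-conjugate-points property of \(M_{1}\) then gives \(y>0\) on the entire portion of the orbit lying over \(M\), so \(\dot y/y\) is immediately well defined there and the \(cy+w\) correction, together with the uniform choice of \(c\), is not needed. This is a mild but genuine simplification; its price is that you must know the enlarged system is still simple (hence has no conjugate points), which the paper also uses and which holds once \(\partial M_{1}\) is sufficiently close to \(\partial M\). The two points you flag as remaining work --- identifying \(K+X_{\bot}(\lambda)+\lambda^{2}\) with the coefficient in the magnetic Jacobi equation, and deducing the absence of conjugate points from the magnetic exponential map being a \(C^{1}\)-diffeomorphism --- are exactly the points the paper disposes of by citation to \cite{burns02} and \cite{herreros10} respectively, so they are not gaps in substance.
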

\begin{proof} Let us consider first a single magnetic geodesic \(\gamma_{x,v}\) with \((x,v)\in\partial_{+}(SM)\), and assume its exit time is \(\tau(x,v)\), and thus it is defined on \([0,\tau(x,v)]\). Restricted to this geodesic we seek to solve the equation: \(\dot{u}+u^{2}+K+X_{\bot}(\lambda)+\lambda^{2}=0.\) Now, following the notation of \cite{burns02} any magnetic Jacobi field with initial conditions tangent to \(SM\) may be written as \(J(t):=x(t)\gamma (t)+y(t)i\gamma (t)\) where \(x(t)\) and \(y(t)\) satisfy:
\begin{equation} \dot{x}=\lambda(\gamma)y \label{equationone}
\end{equation}
\begin{equation} \ddot{y}+[K(\gamma)-\left\langle \nabla \lambda(\gamma),i\dot{\gamma}\right\rangle+\lambda ^{2}(\gamma)]y=0 \label{jacobiequation}
\end{equation}
This is relevant to our situation because should we find a nowhere vanishing solution \(z(t)\) to (\ref{jacobiequation}), then \(u(t):=\frac{\dot{z}(t)}{z(t)}\) will solve our original equation. The theory of ODEs guarantees that we can solve (\ref{equationone}) and (\ref{jacobiequation}) once we specify the initial conditions. Therefore, obtain solutions with \(\dot{x}(0)=\dot{y}(0)=1\) and \(x(0)=y(0)=0\). Now, from \cite{herreros10} we know that the fact that our manifold has no conjugate points implies that any Jacobi field along a geodesic which vanishes initially, and is parallel to that geodesic at a later point in time, must be identically zero. Hence, \(y(t)\) since vanishing initially, can never hit \(0\) again. Now we must do something to take care of its initial zero. To this end choose \(w\) a solution of (\ref{jacobiequation}) such that \(\dot{w}(0)=0\) and \(w(0)=1\). Now by continuity \(w(t)\) must be positive on \([0,t_{w}]\) for some \(t_{w}>0.\) Note that \(y(t)\) is bounded from below on \([t_{w},\tau(x,v)]\). Therefore, \(z(t):=cy(t)+w(t)\) is a solution of (\ref{jacobiequation}) where \(c>0\) is chosen sufficiently large such that \(z(t)\neq 0\ \ \ \forall t\in [0,\tau(x,v)].\) Furthermore, by considering \(z(t)=z(x,v,t)\) as depending on the additional parameters \(x\) and \(v\), specifying the geodesic, we see that the Theory of ODEs will further guarantee that \(z(x,v,t)\) depends smoothly on all of its parameters (here we must use compactness to choose \(c\) such that our construction holds for all magnetic geodesics).\\
\indent Set \(G:=\left\{(x,v,t):(x,v)\in\partial_{+}(SM),\ \ 0\leq t\leq\tau(x,v)\right\}.\) Now we define a map \(\pi:SM\rightarrow G\) as follows: \(\pi(x,v):=(x',v',t)\) where \((x',v')\in\partial_{+}(SM)\) determines the unique magnetic geodesic which passes through \((x,v)\) at time \(t\). Thanks to the discussion in Section \(2.2\) of \cite{dairbekov107} we know that this map will be smooth on \(SM\backslash S(\partial M).\) In order to obtain smoothness everywhere we employ a technique used earlier: embed \(M\) into a closed manifold \(\tilde{M}\), and smoothly extend \(g\) and \(\alpha\) to a Riemannian metric and 1-form on \(\tilde{M}\) respectively. We shall continue to denote the extensions \((g,\alpha).\) Given \(U\subset\tilde{M}\) an open neighbourhood of \(M\) with smooth boundary, then \((\overline{U},g,\alpha)\) will also be a simple magnetic system provided that \(\partial U\) is sufficiently close to \(\partial M.\) We now repeat the entire argument above for \(\overline{U}\), and obtain that the analogous maps \(\pi:S\overline{U}\rightarrow G'\) and \(z: G'\rightarrow \mathbb{R}\). Now define \(u(x,v,t):= \frac{\dot{z}(x,v,t)}{z(x,v,t)}\), and by setting \(\tilde{u}:=u\circ\pi|_{SM}\), we obtain a smooth solution to the magnetic Riccati equation on \(SM\).
\end{proof}

\begin{lemma} Let \((M,g,\alpha)\) be a simple magnetic system with \(\operatorname{dim}(M)=2.\) If \(u:SM\rightarrow \mathbb{C}^{n}\) is a smooth function such that \(u|_{\partial(SM)}=0\), then \[|(X+\lambda V + A +\Phi)(Vu)|^{2}- \left\langle (K+X_{\bot}(\lambda)+\lambda^{2}) V(u),V(u)\right\rangle \geq 0.\]\label{secondlemma}
\end{lemma}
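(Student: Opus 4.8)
The plan is to establish the inequality as an exact energy identity obtained by a completion of squares, using the real scalar solution $r$ of the magnetic Riccati equation furnished by the preceding lemma. Write $P:=X+\lambda V + A + \Phi$ and set $\psi:=V(u)$. Since $u$ vanishes on $\partial(SM)$ and the vector field $V$ is tangent to $\partial(SM)$ (the fibres over $\partial M$ are full circles), the function $\psi$ also vanishes on $\partial(SM)$; this boundary vanishing is what will make the forthcoming integration by parts clean.

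The core computation is to expand the manifestly nonnegative quantity $\|P\psi - r\psi\|^{2}\geq 0$ and to show it equals the left-hand side of the claimed inequality. Expanding gives $\|P\psi\|^{2} - 2\operatorname{Re}\langle P\psi, r\psi\rangle + \|r\psi\|^{2}$, so everything reduces to evaluating the cross term. Here I would use two structural facts. First, because $A+\Phi$ takes values in the skew-Hermitian matrices, the pointwise inner product $\langle (A+\Phi)\psi,\psi\rangle_{\mathbb{C}^{n}}$ is purely imaginary and drops out after taking real parts; consequently $2\operatorname{Re}\langle P\psi,\psi\rangle_{\mathbb{C}^{n}} = (X+\lambda V)|\psi|^{2}$ pointwise, since $X+\lambda V$ is a genuine (real) vector field and hence a derivation. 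Second, $r$ is real and scalar, so $2\operatorname{Re}\langle P\psi,r\psi\rangle = \int_{SM} r\,(X+\lambda V)|\psi|^{2}\, d\Sigma^{3}$.

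I would then integrate by parts. By Lemma~\ref{thelemma}, applied in the scalar, connection-free case, the operator $X+\lambda V$ is skew-symmetric on functions vanishing on $\partial(SM)$; since $|\psi|^{2}$ vanishes there, this moves the derivative onto $r$ and yields $2\operatorname{Re}\langle P\psi,r\psi\rangle = -\langle (X+\lambda V)(r)\,\psi,\psi\rangle$. Substituting the Riccati equation $(X+\lambda V)(r) = -r^{2} - (K+X_{\bot}(\lambda)+\lambda^{2})$ then gives $2\operatorname{Re}\langle P\psi,r\psi\rangle = \|r\psi\|^{2} + \langle (K+X_{\bot}(\lambda)+\lambda^{2})\psi,\psi\rangle$. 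Feeding this back into the expansion, the $\|r\psi\|^{2}$ terms cancel and one is left with the identity $\|P\psi - r\psi\|^{2} = \|P(Vu)\|^{2} - \langle (K+X_{\bot}(\lambda)+\lambda^{2})V(u),V(u)\rangle$, whose left side is $\geq 0$, which is exactly the assertion.

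The individual steps are short once assembled; the only point requiring real care — and the main conceptual obstacle — is that the entire scheme hinges on the existence of a globally smooth real solution $r$ of the magnetic Riccati equation on all of $SM$, which is precisely what the preceding lemma supplies and what would fail in the presence of conjugate points. Beyond that, I would verify that the skew-symmetry in Lemma~\ref{thelemma} is legitimately applied with the scalar weight $r$ tested against $|\psi|^{2}$, and that the associated boundary term genuinely vanishes, before declaring the identity.
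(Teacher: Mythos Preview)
Your proof is correct and follows essentially the same route as the paper: expand the nonnegative quantity $\|(X+\lambda V+A+\Phi)(Vu)-rVu\|^{2}$ using a smooth solution $r$ of the magnetic Riccati equation, exploit skew-Hermiticity of $A+\Phi$ to reduce the cross term to $\int_{SM} r\,(X+\lambda V)|Vu|^{2}$, and then integrate by parts (equivalently, the paper writes this step as computing $(X+\lambda V)(r\|Vu\|^{2})$ via the product rule and noting that its integral over $SM$ vanishes since $Vu|_{\partial(SM)}=0$). The only difference is cosmetic.
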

\begin{proof} Consider a smooth function \(r:SM\rightarrow\mathbb{R}\) which solves the Riccati-type equation: \((X+\lambda V)(r)+r^{2}+K+X_{\bot}(\lambda)+\lambda^{2}=0.\) These exist by the above result. Now we compute using the fact that \(A+\Phi\) is skew-Hermitian:
\begin{align*}
\|& (X+\lambda V+A+\Phi)(V(u))-rV(u) \| ^{2}_{\mathbb{C}^{n}} \\
                                 & = \left\|(X+\lambda V+A+\Phi)(V(u))\right\|^{2}_{\mathbb{C}^{n}} -2\operatorname{Re}\left\{  \left\langle (X+\lambda V+A+\Phi)(V(u)),rV(u)\right\rangle_{\mathbb{C}^{n}} \right\}  +   r^{2}\left\|V(u)\right\|^{2}_{\mathbb{C}^{n}}\\
                                 & = \left\|(X+\lambda V+A+\Phi)(V(u))\right\|^{2}_{\mathbb{C}^{n}} -2r\operatorname{Re}\left\{ \left\langle (X+\lambda V)(V(u)),V(u)\right\rangle_{\mathbb{C}^{n}} \right\} + r^{2}\left\|V(u)\right\|^{2}_{\mathbb{C}^{n}}
\end{align*}
\indent Employing the Riccati-type equation we deduce: \[(X+\lambda V)(r\left\|V(u)\right\|^{2}_{\mathbb{C}^{n}}) = (-r^{2}-K-X_{\bot}(\lambda)-\lambda^{2})\left\|V(u)\right\|^{2}_{\mathbb{C}^{n}} + 2r\operatorname{Re}\left\{\left\langle (X+\lambda V)(V(u)),V(u)\right\rangle_{\mathbb{C}^{n}} \right\}\]
Therefore, 
\begin{align*}
\|& (X+\lambda V+A+\Phi)(V(u))-rV(u) \| ^{2}_{\mathbb{C}^{n}} \\
     & = \left\|(X+\lambda V+A+\Phi)(V(u))\right\|^{2}_{\mathbb{C}^{n}} +(-K-X_{\bot}(\lambda)-\lambda^{2})\left\|V(u)\right\|^{2}_{\mathbb{C}^{n}}-(X+\lambda V)(r\left\|V(u)\right\|^{2}_{\mathbb{C}^{n}})
\end{align*}
Now integrate this expression over \(SM\) with respect to \(d\Sigma^{3}\) and use that \(V(u)\) vanishes on \(\partial(SM)\) to obtain: 
\begin{align*}
\|(X+\lambda V & +A+\Phi)(V(u))\|^{2} - \left\langle (K+X_{\bot}(\lambda)+\lambda^{2})V(u),V(u)\right\rangle \\
& = \| (X+\lambda V+A+\Phi)(V(u))-rV(u) \| ^{2}\geq 0
\end{align*}
\end{proof}

\begin{theorem} [\sc{Energy identity}] If \(u:SM\rightarrow \mathbb{C}^{n}\) is a smooth function such that \(u|_{\partial(SM)}=0\), then 
\begin{align*}
|(X+\lambda &V+A+\Phi)V(u)|^{2} - \left\langle \ast F_{A}u,V(u)\right\rangle - \operatorname{Re}\left\{\left\langle (\ast d_{A}\Phi)u, V(u)\right\rangle\right\}  - 2\operatorname{Re}  \left\{\left\langle \lambda V(u), \Phi(u)\right\rangle\right\}\\
&-\operatorname{Re}\left\{\left\langle \Phi u, (X+A+\Phi)(u)\right\rangle\right\}  - \left\langle (K+X_{\bot}(\lambda)+\lambda^{2}) V(u),V(u)\right\rangle \\ 
&= |V[(X+\lambda V+A+\Phi)u]|^{2}  -  |(X+\lambda V +A+\Phi)(u)|^{2}
\end{align*}\label{energy}
\end{theorem}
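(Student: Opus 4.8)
The plan is to read the asserted formula as a Pestov-type energy identity for the twisted operator $P:=X+\lambda V+A+\Phi$, proved by commuting $V$ past $P$ and integrating by parts. First I would record that the integrations by parts are legitimate: since $u|_{\partial(SM)}=0$ and the vertical field $V$ is tangent to $\partial(SM)$ (the full sphere bundle over $\partial M$), the function $Vu$ also vanishes on $\partial(SM)$. Hence Lemma~\ref{thelemma} applies to both $P$ and $X_\bot+\ast A$, and all boundary contributions below disappear.

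Next I would compute the commutator $[V,P]$. Using $[V,X]=-X_\bot$, the fact that $\lambda$ and $\Phi$ are pulled back from $M$ (so $V\lambda=0$ and $V\Phi=0$), and the relation $V(A)=\ast A$ for the vertical derivative of the connection form, one obtains $[V,P]=-X_\bot+\ast A$. Writing $V(Pu)=P(Vu)+[V,P]u$ and expanding the square gives
\[
|V(Pu)|^2=|P(Vu)|^2+2\operatorname{Re}\langle P(Vu),(-X_\bot+\ast A)u\rangle+|(-X_\bot+\ast A)u|^2.
\]
Since the claimed left-hand side is $|P(Vu)|^2-C$, where $C$ is the sum of the five subtracted terms there, and the claimed right-hand side is $|V(Pu)|^2-|Pu|^2$, the identity reduces to proving
\[
2\operatorname{Re}\langle P(Vu),(-X_\bot+\ast A)u\rangle+|(-X_\bot+\ast A)u|^2=|Pu|^2-C.
\]

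The heart of the argument is the cross term. Here I would integrate by parts to transpose $X_\bot$, and the skew-Hermitian multiplier $\ast A$, onto $P(Vu)$, and then repeatedly use the structure equations $[V,X_\bot]=X$ and $[X,X_\bot]=-KV$ together with $V\lambda=0$. Exactly as in the scalar geodesic Pestov identity, this process produces the term $|Pu|^2$ and, from the commutators of $X_\bot$ with $X+\lambda V$, the scalar curvature pairing $\langle(K+X_\bot(\lambda)+\lambda^2)Vu,Vu\rangle$. The connection and Higgs contributions must then be regrouped: derivatives of $A$ combine with the quadratic piece of $|\ast A u|^2$ and $[\ast A,A]$ to assemble, via $F_A=dA+A\wedge A$, into $\langle\ast F_A u,Vu\rangle$; the pieces involving $\Phi$ combine, using $X_\bot\Phi=\ast d\Phi$ and the cross terms with $A$, into $\operatorname{Re}\langle(\ast d_A\Phi)u,Vu\rangle$ via $d_A\Phi=d\Phi+[A,\Phi]$; and the remaining zeroth-order pairings yield $2\operatorname{Re}\langle\lambda Vu,\Phi u\rangle$ and $\operatorname{Re}\langle\Phi u,(X+A+\Phi)u\rangle$. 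Collecting these recovers $|Pu|^2-C$.

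The main obstacle I anticipate is precisely this bookkeeping. Keeping the signs straight while transposing $X_\bot$ and $\ast A$, discarding the total-derivative terms that vanish by the boundary condition, and---most delicately---checking that the connection and Higgs fragments coalesce into the gauge-covariant quantities $\ast F_A$ and $\ast d_A\Phi$ rather than into noncovariant remnants, is where all the care is needed. A secondary but genuine point is fixing the orientation convention in $V(A)=\ast A$, since it determines the sign with which $\ast F_A$ appears in the final identity; once this convention is pinned down consistently with $X_\bot f=\ast df$, the terms match as stated.
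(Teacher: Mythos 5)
Your proposal is correct and follows essentially the same route as the paper: a Pestov-type identity built from the commutators \([V,X+\lambda V+A+\Phi]\), \([V,X_\bot+\ast A]\) and \([X+\lambda V+A+\Phi,X_\bot+\ast A]\), with all integrations by parts justified by \(u|_{\partial(SM)}=0\); the paper merely packages the computation as \(\|Pu\|^{2}=\|P^{*}u\|^{2}+\left\langle [P^{*},P]u,u\right\rangle\) for \(P=V(X+\lambda V+A+\Phi)\) rather than expanding \(|V(Pu)|^{2}=|P(Vu)+[V,P]u|^{2}\) directly, and it concludes with a ``take imaginary parts'' step to place the \(\operatorname{Re}\)'s where they appear in the statement. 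The only point to pin down is the sign you flag yourself: in the paper's conventions \([V,X+\lambda V+A+\Phi]=-X_\bot-\ast A\) (not \(-X_\bot+\ast A\)), and with that convention the curvature term comes out as \(-\left\langle \ast F_{A}u,V(u)\right\rangle\) exactly as stated.
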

\begin{proof} We begin by noting the structural equations:
\begin{align*}
[V,X] & =-X_{\bot}\\
[V,X_{\bot}] & =X\\
[X,X_{\bot}] & =-KV
\end{align*}

We also recall from \cite{paternain111} the following commutators:
\[[V,X+A+\Phi]=-X_{\bot}-\ast A\]
\[[V,X_{\bot}+\ast A]=X+A\]
\[[X+A+\Phi,X_{\bot}+\ast A]=-KV-\ast F_{A}-\ast d_{A}\Phi\]
To proceed one needs to include the magnetic term, which after a short computation yields:
\[[V,X+\lambda V +A+\Phi]=-X_{\bot}-\ast A\]
\[[V,X_{\bot}+\ast A]=X+A\]
\[[X+\lambda V+A+\Phi,X_{\bot}+\ast A]=-KV+\lambda X-X_{\bot}(\lambda)V+\lambda A-\ast F_{A}-\ast d_{A}\Phi\]

Now consider the operator \(P:=V(X+\lambda V+A+\Phi)\) acting on \(C^{\infty}(SM,\mathbb{C}^{n})\). We may decompose \(P\) into self-adjoint and skew-adjoint components (with respect to the \(L^{2}\) inner product) as follows: \[P=C+iD,\ \ C:=\frac{1}{2}(P+P^{*}),\ \ D:=\frac{1}{2i}(P-P^{*}).\] 
Note that the formal adjoint of \(P\) is \(P^{*}=(X+\lambda V+A+\Phi)V\). Now, given \(u\in C^{\infty}(SM,\mathbb{C}^{n})\) such that \(u|_{\partial(SM)}=0,\) then it easily follows that \[\left\|Pu\right\|^{2}=\left\|Cu\right\|^{2}+\left\|Du\right\|^{2}+\left\langle i[C,D]u,u\right\rangle.\] Furthermore, one computes \(i[C,D]=\frac{1}{2}[P^{*},P]\) and \(\left\|Cu\right\|^{2}+\left\|Du\right\|^{2}=\frac{1}{2}(\left\|Pu\right\|^{2}+\left\|P^{*}u\right\|^{2}).\) Thus, we may write our expression above as 
\begin{align}
\left\|Pu\right\|^{2}=\left\|P^{*}u\right\|^{2}+\left\langle [P^{*},P]u,u\right\rangle \label{better}
\end{align}
We compute the commutator as follows:
\begin{align*}
\left\langle [P^{*},P]u,u\right\rangle & =  \left\langle (X+\lambda V+A+\Phi)VV(X+\lambda V+A+\Phi)u-V(X+\lambda V+A+\Phi)(X+\lambda V+A+\Phi)Vu,u\right\rangle\\
                    & =  \left\langle V(X+\lambda V+A+\Phi)V(X+\lambda V+A+\Phi)u+(X_{\bot}+\ast A)V(X+\lambda V+A+\Phi)u,u\right\rangle\\
                    & \ \ \ \ \ \ \ \ \ \ -\left\langle V(X+\lambda V+A+\Phi)V(X+\lambda V+A+\Phi)u-V(X+\lambda V+A+\Phi)(X_{\bot}+\ast A)u,u \right\rangle\\
                    & = \left\langle V(X_{\bot}+\ast A)(X+\lambda V+A+\Phi)u-(X+A)(X+\lambda V+A+\Phi)u,u \right\rangle\\
                    & \ \ \ \ \ \ \ \ \ \ -\left\langle V(X+\lambda V+A+\Phi)(X_{\bot}+\ast A)u,u\right\rangle\\
                    & = \left\langle V[X_{\bot}+\ast A, X+\lambda V+A+\Phi]u-(X+A)(X+\lambda V+A+\Phi)u,u\right\rangle\\
                    & = \left\langle V(KV-\lambda X+X_{\bot}(\lambda)V-\lambda A+\ast F_{A}+\ast d_{A}\Phi)u - (X+A)(X+\lambda V+A+\Phi)u,u\right\rangle\\
                    & = -\left\langle(KV-\lambda X + X_{\bot}(\lambda)V-\lambda A+\ast F_{A}+\ast d_{A}\Phi )u,V(u)\right\rangle\\
                    & \ \ \ \ \ \ \ \ \ \ +\left\langle(\lambda V+\Phi)(X+\lambda V+A+\Phi)u,u\right\rangle+|(X+\lambda V+A+\Phi)(u)|^{2}\\
                    & = -\left\langle(K+ X_{\bot}(\lambda)+\lambda^{2})V(u),V(u)\right\rangle-\left\langle\ast F_{A}u,V(u)\right\rangle-\left\langle(\ast d_{A}\Phi)u,V(u)\right\rangle-\left\langle(-\lambda A-\lambda X )u,V(u)\right\rangle\\
                    & \ \ \ \ \ \ \ \ \ \ +\left\langle(\lambda V+\Phi)(X+\lambda V+A+\Phi)u,u\right\rangle+|(X+\lambda V+A+\Phi)(u)|^{2}\\
                    & = -\left\langle(K+X_{\bot}(\lambda)+\lambda^{2})V(u),V(u)\right\rangle-\left\langle\ast F_{A}u,V(u)\right\rangle-\left\langle(\ast d_{A}\Phi)u,V(u)\right\rangle+\left\langle\lambda Au+\lambda Xu,V(u)\right\rangle\\
                    & \ \ \ \ \ \ \ \ \ \ +\left\langle(\lambda V(X+A+\Phi)u,u\right\rangle+ \left\langle \Phi(X+\lambda V+A+\Phi)(u),u\right\rangle+|(X+\lambda V+A+\Phi)(u)|^{2}\\
                    & = -\left\langle(K+X_{\bot}(\lambda)+\lambda^{2})V(u),V(u)\right\rangle-\left\langle\ast F_{A}u,V(u)\right\rangle-\left\langle(\ast d_{A}\Phi)u,V(u)\right\rangle\\
                    & \ \ \ \ \ \ \ \ \ \ +2\operatorname{Re}\left\{\left\langle(\lambda V(u),\Phi u\right\rangle\right\}-\left\langle (X+A+\Phi)(u),\Phi u\right\rangle+|(X+\lambda V+A+\Phi)(u)|^{2}\\
\end{align*}

Substituting the above into (\ref{better}) and taking imaginary parts shows:
\[\left\langle(\ast d_{A}\Phi)u,V(u)\right\rangle+\left\langle (X+A+\Phi)(u),\Phi u\right\rangle = \operatorname {Re}\left\{  \left\langle(\ast d_{A}\Phi)u,V(u)\right\rangle+\left\langle (X+A+\Phi)(u),\Phi u\right\rangle   \right\}\]
Therefore, using our expression for the commutator \(\left\langle [P^{*},P]u,u\right\rangle\) in (\ref{better}) yields Theorem~\ref{energy}.
\end{proof}

\begin{proof}[Proof of Theorem~\ref{mainresult}] \(I_{A,\Phi}=0\) implies that there exists a function \(u:SM\rightarrow\mathbb{C}^{n}\) with \(u|_{\partial(SM)}=0\) which satisfies \[(X+\lambda V+A+\Phi)(u)=-f.\]
Proposition~\ref{theproposition} implies that \(u\in C^{\infty}(SM,\mathbb{C}^{n})\). We seek to show that \(u\) is both holomorphic and antiholomorphic. Given this, then \(u=u_{0}\) only depends on \(x\) and \(u|_{\partial(M)}=0\). Furthermore, we have \[du+Au=-\sigma, \ \ \ \Phi u =-F\] Now our theorem follows upon choosing \(p=-u.\) Thus, it remains to show \(u\) is both holomorphic and antiholomorphic. We will simply explain the holomorphic case (the antiholomorphic case follows by an analogous argument). \\
\indent Choose a real-valued 1-form \(\varphi\) such that \(d\varphi=dV_{g}\) where \(dV_{g}\) is the area form of \(M\). Define \(A_{s}:=A+is\varphi \operatorname{Id}\). Now, \(s\in\mathbb{R}\), \(A_{s}\) is a unitary connection and \(i\ast F_{A_{s}}=i\ast F_{A}-s \operatorname{Id}\). By Theorem~\ref{thetheorem} we can find a holomorphic function \(\omega\in C^{\infty}(SM)\) such that \((X+\lambda V)\omega=-i\varphi\). Thus, \(u_{s}:=e^{s\omega}u\) satisfies \[(X+\lambda V +A_{s}+\Phi)u_{s}=-e^{s\omega}f.\]
Now let \(v:=(\operatorname{Id}-i\mathcal{H})u_{s}-(u_{s})_{0}\). We seek to show that \(v=0\). Using Proposition~\ref{commutatorformula} we have
\begin{align*}
(X+\lambda V +A_{s}+\Phi)v&=(\operatorname{Id}-i\mathcal{H})(X+\lambda V +A_{s}+\Phi)u_{s}+i[\mathcal{H},X+A_{s}]u_{s}\\
&\ \ \ \ \ \ \ \ \ \ \ \ \ \ \ \ \ \ \ \  -(X+\lambda V +A_{s}+\Phi)(u_{s})_{0}\\
& = -(\operatorname{Id}-i\mathcal{H})e^{s\omega}f+i(X_{\bot}+\ast A_{s})(u_{s})_{0}+i\left\{(X_{\bot}+\ast A_{s})u_{s}\right\}_{0}\\
&\ \ \ \ \ \ \ \ \ \ \ \ \ \ \ \ \ \ \ \  -(X+\lambda V +A_{s}+\Phi)(u_{s})_{0}.
\end{align*}
Thus, \[(X+\lambda V +A_{s}+\Phi)v=h+\beta\]
where \(h\in C^{\infty}(M,\mathbb{C}^{n})\) and \(\beta\) is a 1-form. Now apply the energy identity with the connection \(A_{s}\) and the function \(v\) (which also satisfies \(v|_{\partial(SM)}=0)\) to deduce:
\begin{align}
|(X+\lambda &V+A_{s}+\Phi)V(u)|^{2} - \left\langle \ast F_{A_{s}}u,V(u)\right\rangle - \operatorname{Re}\left\{\left\langle (\ast d_{A_{s}}\Phi)u, V(u)\right\rangle\right\}  - 2\operatorname{Re}  \left\{\left\langle \lambda V(u), \Phi(u)\right\rangle\right\} \nonumber \\
&-\operatorname{Re}\left\{\left\langle \Phi u, (X+A_{s}+\Phi)(u)\right\rangle\right\}  - \left\langle (K+X_{\bot}(\lambda)+\lambda^{2}) V(u),V(u)\right\rangle \nonumber \\ 
&- |V[(X+\lambda V+A_{s}+\Phi)u]|^{2}  +  |(X+\lambda V +A_{s}+\Phi)(u)|^{2}=0.\label{energyidentity}
\end{align}
Now we invoke Lemma~\ref{firstlemma} and Lemma~\ref{secondlemma} to obtain:
\begin{align}
|(X+\lambda V + A_{s} +\Phi)(V(v))|^{2}- \left\langle (K+X_{\bot}(\lambda)+\lambda^{2}) V(v),V(v)\right\rangle \geq 0 \label{1}
\end{align}
\begin{align}
|(X+\lambda V + A_{s}+ \Phi)(v)|^{2}-|V[(X+\lambda V + A_{s} + \Phi)(v)]|^{2}=|h|^{2}\geq 0. \label{2}
\end{align}
From \cite{paternain111} we know we have the following estimates:
\begin{align} 
-\left\langle   \ast F_{A_{s}}v, V(v)\right\rangle= \sum^{-1}_{k=-\infty}{|k|\left\langle -i\ast F_{A_{s}}v_{k},v_{k}\right\rangle}\geq(s-\left\|F_{A}\right\|_{L^{\infty}(M)})\sum^{-1}_{k=-\infty}{|k||v_{k}|^{2}} \label{3}
\end{align}
\begin{align} \operatorname{Re}\left\{\left\langle (\ast d_{A}\Phi)v,V(v) \right\rangle\right\} \leq C_{A,\Phi}\sum^{-1}_{k=-\infty}{|k||v_{k}|^{2}} \label{4}
\end{align}
for some \(C_{A,\Phi}\in\mathbb{R}\).
\begin{align}
\operatorname{Re}\left\{\left\langle \Phi v,(X+ A_{s}+\Phi)v\right\rangle\right\}  \leq C_{A,\Phi}\sum^{-1}_{k=-\infty}{|v_{k}|^{2}} \label{5}
\end{align}
for some \(C_{A,\Phi}\in\mathbb{R}\).
Lastly, it is easy to see that we have the bound:
\begin{align} 
-\operatorname{Re}\left\{  \left\langle \lambda V(v),\Phi v\right\rangle \right\}\geq -C_{\Phi}\sum^{-1}_{k=-\infty}{|k||v_{k}|^{2}} \label{6}
\end{align}
for some \(C_{\Phi}\in\mathbb{R}\).
Using the estimates (\ref{1})-(\ref{6}) in (\ref{energyidentity}) we obtain: \[0\geq |h|^{2}+(s-C_{A,\Phi})\sum^{-1}_{k=-\infty}{|k||v_{k}|^{2}}\] for some \(C_{A,\Phi}\in\mathbb{R}\). Finally, choosing \(s\) sufficiently large ensures that \(v_{k}=0\) for all \(k\). Thus, \(u_{s}\) is holomorphic, and so \(u=e^{-s\omega}u_{s}\) is also holomorphic.
\end{proof}

\section{Scattering Data Determines the Connection and Higgs Field}
\indent Recall we define \(C^{A,\Phi}_{+}:\partial_{+}(SM)\rightarrow GL(n,\mathbb{C})\) by \[C^{A,\Phi}_{+}:=U_{+}|_{\partial_{+}(SM)},\] where \(U_{+}:SM\rightarrow GL(n,\mathbb{C})\) is the unique matrix solution to the transport equation: \[(X+\lambda V)U_{+}+\mathcal{A}U_{+}=0,\ \ U_{+}|_{\partial_{-}(SM)}=\operatorname{Id}.\]

\begin{proof}[Proof of Theorem~\ref{gauge}] Consider the unique matrix solution \(U_{A,\Phi}:SM\rightarrow U(n)\) to the transport equation: \[(X+\lambda V+A+\Phi)U_{A,\Phi}=0,\ \ U_{A,\Phi}|_{\partial_{-}(SM)}=\operatorname{Id}.\] Then \(C^{A,\Phi}_{+}=U_{A,\Phi}|_{\partial_{+}(SM)}.\) Also consider the unique matrix solution \(U_{B,\Psi}:SM\rightarrow U(n)\) to the transport equation: \[(X+\lambda V+B+\Psi)U_{B,\Psi}=0,\ \ U_{B,\Psi}|_{\partial_{-}(SM)}=\operatorname{Id}.\] Then \(C^{B,\Psi}_{+}=U_{B,\Psi}|_{\partial_{+}(SM)}.\) Now define \(U:=U_{A,\Phi}(U_{B,\Psi})^{-1}:SM\rightarrow U(n).\) This \(U\) satisfies the following system: \[(X+\lambda V)U+AU+\Phi U-UB-U\Psi=0,\ \ U|_{\partial (SM)}=\operatorname{Id}.\] We seek to show that \(U\) is smooth and dependent only on the basepoint \(x\in M\), thereby being our sought function: \(U:M\rightarrow U(n).\)\\
\indent Define \(W:=U-\operatorname{Id}:SM\rightarrow \mathbb{C}^{n\times n},\) where \(\mathbb{C}^{n\times n}\) is to be thought of as the set of all \(n \times n\) complex matrices. Now, \(W\) satisfies:
\begin{equation} (X+\lambda V)W+AW-WB+\Phi W-W\Psi=B-A+\Psi-\Phi, \label{no1}
\end{equation}
\begin{equation}W|_{\partial(SM)}=0. \label{no2}
\end{equation}
\indent Introduce a new connection \(\hat{A}\) on the bundle \(M\times \mathbb{C}^{n\times n}\), and a new Higgs field \(\hat{\Phi}\) as follows: given a matrix \(R\in\mathbb{C}^{n\times n}\) we define \(\hat{A}(R):=AR-RB\) and \(\Phi(R):=\Phi R-R\Psi.\) One can check that \(\hat{A}\) is a unitary connection if \(A\) and \(B\) are, and that \(\hat{\Phi}\) is skew-Hermitian when \(\Phi\) and \(\Psi\) are. Then (\ref{no1}) and (\ref{no2}) can be interpreted as saying that \(I_{\hat{A},\hat{\Phi}}(A-B+\Phi-\Psi)=0.\) Note that any solution \(W:SM\rightarrow \mathbb{C}^{n\times n}\) to (\ref{no1}) with \(W|_{\partial_{-}(SM)}=0\) is unique. Now we invoke Theorem~\ref{mainresult} to conclude that there exists a smooth function \(p:M\rightarrow\mathbb{C}^{n\times n}\) such that \(p\) vanishes on \(\partial M,\) and \(B-A=d_{\hat{A}}p=dp+Ap-pB\) and \(\Psi-\Phi=\hat{\Phi}p=\Phi p-p\Psi.\) Therefore, \[(X+\lambda V)p+Ap-pB+\Phi p-p\Psi=B-A+\Psi-\Phi.\] But now, by uniqueness, \(W=p\), and thus \(U:=W+\operatorname{Id}\) is the required matrix function.
\end{proof}

\section{An Application to Tensor Tomography}
\indent We begin with a proposition which generalizes a result from \cite{salo11}. In this section we will say that \(f\in C^{\infty}(SM)\) has degree \(m\) if \(f_{k}=0\) for \(|k|\geq m+1\), and recall that we denote the unattenuated magnetic ray transform by \(I\).
\begin{proposition} Let \((M,g,\alpha)\) be a simple magnetic system with \(\operatorname{dim}(M)=2.\) Let \(f\) be a smooth holomorphic function on \(SM\). Suppose \(u\in C^{\infty}(SM)\) satisfies \[(X+\lambda V)u=-f,\ \ \ u|_{\partial(SM)}=0.\] Then \(u\) is holomorphic and \(u_{0}=0.\)\label{solutionhol}
\end{proposition}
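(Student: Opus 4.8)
The plan is to mirror the proof of Theorem~\ref{mainresult}: introduce an artificial connection to run the energy estimate and kill the negative Fourier modes of a twisted version of $u$, proving holomorphicity, and then to read off $u_0=0$ from the lowest Fourier mode of the transport equation.

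First I would set up the twist exactly as in the proof of Theorem~\ref{mainresult}. Choose a real $1$-form $\varphi$ with $d\varphi=dV_g$ and, applying Theorem~\ref{thetheorem} to the $1$-form $i\varphi$, a holomorphic $\omega\in C^{\infty}(SM)$ with $(X+\lambda V)\omega=-i\varphi$. Put $A_s:=is\varphi\,\operatorname{Id}$ and $u_s:=e^{s\omega}u$; a short computation gives $(X+\lambda V+A_s)u_s=-e^{s\omega}f$. The key point is that, since $\omega$ and $f$ are both holomorphic, their product $e^{s\omega}f$ is again holomorphic. Moreover $u_s$ is smooth, and because $\partial(SM)$ is a union of whole fibres on which $u$ vanishes, $u_s$ and all the fibrewise operators applied to it vanish on $\partial(SM)$.

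Next, set $v:=(\operatorname{Id}-i\mathcal{H})u_s-(u_s)_0=2\sum_{k\le-1}(u_s)_k$, so that $u_s$ is holomorphic precisely when $v=0$. Running the commutator formula Proposition~\ref{commutatorformula} exactly as in the proof of Theorem~\ref{mainresult}, and using that $(\operatorname{Id}-i\mathcal{H})(e^{s\omega}f)=(e^{s\omega}f)_0$ by holomorphicity, I would show $(X+\lambda V+A_s)v=h+\beta$ with $h\in C^{\infty}(M)$ and $\beta$ a $1$-form. Applying the energy identity Theorem~\ref{energy} to $v$ (with $\Phi=0$ and connection $A_s$), Lemma~\ref{firstlemma} converts the right-hand side into $-|h|^2$ and Lemma~\ref{secondlemma} makes the curvature-free part of the left-hand side nonnegative; since the base connection vanishes, estimate (\ref{3}) gives $-\langle *F_{A_s}v,V(v)\rangle=s\sum_{k\le-1}|k|\,|v_k|^2$. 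With no $\Phi$- or $F_A$-terms to absorb, any $s>0$ forces $\sum_{k\le-1}|k|\,|v_k|^2=0$, whence $v=0$, $u_s$ is holomorphic, and therefore $u=e^{-s\omega}u_s$ is holomorphic.

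Finally, to obtain $u_0=0$ I would project $(X+\lambda V)u=-f$ onto $\Omega_{-1}$. Since $u$ and $f$ are now holomorphic, the only surviving contribution is the $\Omega_{-1}$-component of $Xu_0$ (the $\lambda V$-term contributes $-i\lambda u_{-1}=0$ and $f_{-1}=0$), giving $\{Xu_0\}_{-1}=0$. Because $u_0$ is a function on $M$, $Xu_0$ corresponds to $du_0$ and $X_{\bot}u_0$ to $*du_0$, so this is the Cauchy--Riemann equation $du_0=i*du_0$; integrating $\int_M|du_0|^2\,dV_g$ by parts and using $u_0|_{\partial M}=0$ (a consequence of $u|_{\partial(SM)}=0$) yields $du_0=0$, hence $u_0=0$. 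The main obstacle is the structural computation for $(X+\lambda V+A_s)v$: one must verify that holomorphicity of the twisted source $e^{s\omega}f$ is exactly what guarantees this quantity is a function plus a $1$-form, so that Lemmas~\ref{firstlemma} and~\ref{secondlemma} apply; everything else is a direct specialization of the machinery already in place.
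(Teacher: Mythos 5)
Your argument is correct, but it takes a genuinely different (and longer) route than the paper. The paper's proof does not re-run the twisting/energy-estimate machinery at all: it applies Proposition~\ref{commutatorformula} directly to $w:=(\operatorname{Id}-i\mathcal{H})u$ (no $e^{s\omega}$ twist), observes that $(X+\lambda V)w=h+\sigma$ with $h\in C^{\infty}(M)$ and $\sigma=i\ast d(u_{0})$, notes that $w|_{\partial(SM)}=0$ forces the magnetic ray transform of $h+\sigma$ to vanish, and then invokes Theorem~\ref{mainresult} (with $A=\Phi=0$) as a black box: this gives $h=\Phi p=0$ and $\sigma=dp$ with $p|_{\partial M}=0$; since $\sigma=\ast d(iu_{0})$ is also solenoidal, uniqueness of the solenoidal--potential decomposition kills $\sigma$, so $(X+\lambda V)w=0$ and hence $w=0$. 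The payoff of that reduction is that $w=u_{0}+2\sum_{k\le-1}u_{k}=0$ delivers holomorphicity \emph{and} $u_{0}=0$ in one stroke, with no separate Cauchy--Riemann step. What you have done is essentially inline the proof of Theorem~\ref{mainresult} in the special case $A=\Phi=0$ (your observation that $e^{s\omega}f$ stays holomorphic so that $(\operatorname{Id}-i\mathcal{H})(e^{s\omega}f)=(e^{s\omega}f)_{0}$ is exactly the right structural point, and with no curvature or Higgs terms any $s>0$ indeed suffices), and then to recover $u_{0}=0$ you must add the projection onto $\Omega_{-1}$ and the integration-by-parts argument for $\eta_{-}u_{0}=0$, $u_{0}|_{\partial M}=0$. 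That extra step is correct, but note that because you only prove $v=2\sum_{k\le-1}(u_{s})_{k}=0$ rather than $(\operatorname{Id}-i\mathcal{H})u=0$, it is not optional in your version; the paper's formulation avoids it entirely. Your approach buys self-containedness at the cost of duplicating the energy identity; the paper's buys brevity by exploiting the injectivity theorem it has already established.
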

\begin{proof} It is sufficient to show that \((\operatorname{Id}-i\mathcal{H})u=0\). Using Proposition~\ref{commutatorformula} we have, 
\begin{eqnarray*}
(X+\lambda V)(\operatorname{Id}-i\mathcal{H})u & = & (\operatorname{Id}-i\mathcal{H})(X+\lambda V)u+[\mathcal{H},X]u \\
                                               & = & -(\operatorname{Id}-i\mathcal{H})f + iX_{\bot}(u_{0})+i(X_{\bot}(u))_{0}.
\end{eqnarray*}
Therefore, \[(X+\lambda V)(\operatorname{Id}-i\mathcal{H})u = h + \sigma,\] where \(h=-f_{0}+i(X_{\bot}(u))_{0}\in C^{\infty}(M)\) and \(\sigma=i\ast d(u_{0})\) is a \(1\)-form on \(M\). Now, since \((\operatorname{Id}-i\mathcal{H})u|_{\partial(SM)}=0\) we know that the magnetic ray transform of \((X+\lambda V)(\operatorname{Id}-i\mathcal{H})u\) is \(0.\) Hence, by Theorem~\ref{mainresult} we deduce that: \(h=0\), and \(\sigma=dp\) where \(p\in C^{\infty}(M)\) is such that \(p|_{\partial(M)}=0.\) Now, from \cite{sharafutdinov94} we know that we can uniquely decompose a symmetric tensor into solenoidal and potential components, thus since \(\sigma=i\ast d(u_{0}) = \ast d(iu_{0})\) and \(\sigma=dp\) it is both solenoidal and potential, and hence must be identically zero. Therefore, \((X+\lambda V)(\operatorname{Id}-i\mathcal{H})u=0\), and thus, \((\operatorname{Id}-i\mathcal{H})u=0\) since \((\operatorname{Id}-i\mathcal{H})u|_{\partial(SM)}=0.\)
\end{proof}

The same method yields the antiholomorphic case as well:
\begin{proposition} Let \((M,g,\alpha)\) be a simple magnetic system with \(\operatorname{dim}(M)=2.\) Let \(f\) be a smooth antiholomorphic function on \(SM\). Suppose that \(u\in C^{\infty}(SM)\) satisfies \[(X+\lambda V)u=-f,\ \ \ u|_{\partial(SM)}=0.\] Then \(u\) is antiholomorphic and \(u_{0}=0.\)
\end{proposition}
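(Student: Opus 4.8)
The plan is to mirror the proof of Proposition~\ref{solutionhol} verbatim, replacing the operator \(\operatorname{Id}-i\mathcal{H}\) by \(\operatorname{Id}+i\mathcal{H}\) throughout. The statement reduces to showing that \((\operatorname{Id}+i\mathcal{H})u=0\): since \((\operatorname{Id}+i\mathcal{H})u=u_{0}+2\sum_{k=1}^{\infty}u_{k}\), the vanishing of this quantity simultaneously forces \(u_{0}=0\) and \(u_{k}=0\) for every \(k>0\), which is precisely the assertion that \(u\) is antiholomorphic with vanishing zeroth Fourier mode.

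First I would apply the commutator formula (Proposition~\ref{commutatorformula}) with \(A=\Phi=0\), which gives
\[(X+\lambda V)(\operatorname{Id}+i\mathcal{H})u=(\operatorname{Id}+i\mathcal{H})(X+\lambda V)u-i[\mathcal{H},X+\lambda V]u.\]
Substituting \((X+\lambda V)u=-f\) together with \([\mathcal{H},X+\lambda V]u=X_{\bot}(u_{0})+(X_{\bot}u)_{0}\), and using that \(f\) is antiholomorphic so that \((\operatorname{Id}+i\mathcal{H})f=f_{0}\), this reduces to
\[(X+\lambda V)(\operatorname{Id}+i\mathcal{H})u=h+\sigma,\]
where \(h=-f_{0}-i(X_{\bot}u)_{0}\in C^{\infty}(M)\) and \(\sigma=-iX_{\bot}(u_{0})=\ast d(-iu_{0})\) is a \(1\)-form on \(M\). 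The only point needing attention here is the sign change relative to the holomorphic case, which propagates from the term \(-i[\mathcal{H},X+\lambda V]u\) (rather than \(+i[\mathcal{H},X]u\)) and from the replacement \((\operatorname{Id}+i\mathcal{H})f=f_{0}\) for antiholomorphic \(f\).

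Next I would note that \(u|_{\partial(SM)}=0\), together with the fibrewise nature of \(\mathcal{H}\), forces \((\operatorname{Id}+i\mathcal{H})u|_{\partial(SM)}=0\), so the unattenuated magnetic ray transform of \((X+\lambda V)(\operatorname{Id}+i\mathcal{H})u=h+\sigma\) vanishes. Applying Theorem~\ref{mainresult} with \(A=\Phi=0\) then yields \(h=0\) and \(\sigma=dp\) for some \(p\in C^{\infty}(M)\) with \(p|_{\partial M}=0\); that is, \(\sigma\) is potential. But \(\sigma=\ast d(-iu_{0})\) is simultaneously solenoidal, so uniqueness of the solenoidal/potential decomposition \cite{sharafutdinov94} forces \(\sigma=0\). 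Hence \((X+\lambda V)(\operatorname{Id}+i\mathcal{H})u=0\), and since \((\operatorname{Id}+i\mathcal{H})u\) vanishes on \(\partial(SM)\), uniqueness of solutions to the transport ODE gives \((\operatorname{Id}+i\mathcal{H})u=0\), as desired.

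I do not expect a genuine obstacle: this argument is the exact mirror image of the holomorphic case, and the substantive inputs (the commutator identity, the surjectivity-driven Theorem~\ref{mainresult}, and decomposition uniqueness) are identical. The one place to be careful is the bookkeeping of signs when passing from \(\operatorname{Id}-i\mathcal{H}\) to \(\operatorname{Id}+i\mathcal{H}\), after which each step closes exactly as before.
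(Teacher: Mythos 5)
Your argument is correct and is exactly the route the paper intends: it states only that ``the same method yields the antiholomorphic case,'' and your proof is that method, with the sign bookkeeping (the term \(-i[\mathcal{H},X+\lambda V]u\), the identity \((\operatorname{Id}+i\mathcal{H})f=f_{0}\) for antiholomorphic \(f\), and \(\sigma=\ast d(-iu_{0})\)) carried through correctly. Nothing further is needed.
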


We also have the following generalizations of results from \cite{paternain211}:
\begin{proposition} Let \((M,g,\alpha)\) be a simple magnetic system with \(\operatorname{dim}(M)=2.\) Suppose that \(u\in C^{\infty}(SM)\) satisfies \[(X+\lambda V)u=-f,\ \ \ u|_{\partial(SM)}=0.\] If \(m\geq 0\) and if \(f\in C^{\infty}(SM)\) is such that \(f_{k}=0\) for \(k\leq -m-1\), then \(u_{k}=0\) for \(k\leq -m\).\label{more}
\end{proposition}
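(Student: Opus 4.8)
The plan is to induct on $m$. The base case $m=0$ is precisely Proposition~\ref{solutionhol}: the hypothesis $f_k=0$ for $k\le-1$ says $f$ is holomorphic, and the conclusion $u_k=0$ for $k\le 0$ is exactly the assertion that $u$ is holomorphic with $u_0=0$. (The antiholomorphic companion of Proposition~\ref{solutionhol} will also be used below.) For the inductive step I would first exploit linearity: writing $f=f_{-m}+g$ with $g:=f-f_{-m}$, the function $g$ satisfies $g_k=0$ for $k\le-m$, so the induction hypothesis applied to the solution $u^{g}$ of $(X+\lambda V)u^{g}=-g$ gives $u^{g}_k=0$ for $k\le-m+1$. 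Since $u=u^{f_{-m}}+u^{g}$ by uniqueness, it suffices to treat a single bottom mode, i.e. to show that the solution $u$ of $(X+\lambda V)u=-\mu$ with $\mu\in\Omega_{-m}$ and $u|_{\partial(SM)}=0$ satisfies $u_k=0$ for $k\le-m$. Because $\mu$ is antiholomorphic, the antiholomorphic version of Proposition~\ref{solutionhol} already shows $u$ is antiholomorphic with $u_0=0$, so only the deep modes $k\le-m$ remain.

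To kill these deep modes I would reproduce the mechanism from the proof of Theorem~\ref{mainresult}. Choose the real $1$-form $\varphi$ with $d\varphi=dV_{g}$, set $A_s:=is\varphi\,\mathrm{Id}$, and take the holomorphic integrating factor $\omega$ with $(X+\lambda V)\omega=-i\varphi$, so that $u_s:=e^{s\omega}u$ solves $(X+\lambda V+A_s)u_s=-e^{s\omega}\mu$. Applying the energy identity (Theorem~\ref{energy}) to the negative tail of $u_s$ and feeding in Lemma~\ref{secondlemma} together with the curvature estimate $-\langle\ast F_{A_s}v,V(v)\rangle\ge(s-\|F_A\|_{L^{\infty}(M)})\sum_{k\le-1}|k|\,|v_k|^2$, the term carrying the shift parameter $s$ can be made to dominate. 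The point is that the hypothesis $\mu\in\Omega_{-m}$ confines the source to a single Fourier mode, so its contribution to the right-hand side of the energy identity is localized; choosing $s$ large then forces the modes $k\le-m$ to vanish, which is what is required.

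The hard part is the localization of this estimate to the modes $k\le-m$ when $m\ge2$. For $m=1$ the source $\mu\in\Omega_{-1}$ is a genuine $1$-form, and the argument of Proposition~\ref{solutionhol} (apply $\mathrm{Id}-i\mathcal{H}$, use the commutator formula of Proposition~\ref{commutatorformula}, invoke Theorem~\ref{mainresult} and the solenoidal/potential uniqueness) closes the case directly. For $m\ge2$ the mode $\Omega_{-m}$ is no longer a $1$-form, so Theorem~\ref{mainresult} and Lemma~\ref{firstlemma} do not apply off the shelf, and one must run the quantitative energy estimate on the deep tail while carrying the magnetic coupling term $\lambda V$ through the structural equations and commutators recorded in the proof of Theorem~\ref{energy}. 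Here the no-conjugate-points hypothesis is essential: it enters through the solvability of the magnetic Riccati equation underlying Lemma~\ref{secondlemma}, and it is what ultimately makes the boundary condition $u|_{\partial(SM)}=0$ force the deep modes to vanish rather than merely constraining them algebraically.
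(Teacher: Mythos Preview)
Your proposal does not match the paper's argument, and it has two genuine gaps.

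\textbf{The paper's proof.} No induction or energy identity is used. One picks any nonvanishing $h\in\Omega_m$ and sets $A:=-h^{-1}(X+\lambda V)h$, which has degree~$1$. Multiplication by $h$ shifts Fourier modes up by $m$, so the hypothesis $f_k=0$ for $k\le -m-1$ makes $hf$ holomorphic, and $hu$ solves $(X+\lambda V+A)(hu)=-hf$ with $hu|_{\partial(SM)}=0$. Since $A$ has degree~$1$, Theorem~\ref{thetheorem} supplies a holomorphic integrating factor $\omega$ with $(X+\lambda V)\omega=A$; then $e^{\omega}hu$ solves the unattenuated equation $(X+\lambda V)(e^{\omega}hu)=-e^{\omega}hf$ with holomorphic right-hand side and vanishing boundary values, so Proposition~\ref{solutionhol} gives that $e^{\omega}hu$ is holomorphic with zero $0$-mode. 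Dividing back, $hu$ is holomorphic with $(hu)_0=0$, i.e.\ $u_k=0$ for $k\le -m$. The whole point is to reduce to $m=0$ in one step by a Fourier shift, not by induction.

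\textbf{Gap 1: the splitting loses the boundary condition.} You write $u=u^{f_{-m}}+u^{g}$ and want to apply the induction hypothesis to $u^{g}$. But the Proposition requires $u^{g}|_{\partial(SM)}=0$, not merely $u^{g}|_{\partial_{-}(SM)}=0$. There is no reason for $I(g)$ or $I(f_{-m})$ to vanish separately; only their sum $I(f)=0$. So you cannot invoke the induction hypothesis on $u^{g}$, nor can you pose the residual problem for $\mu=f_{-m}$ with the full boundary condition $u|_{\partial(SM)}=0$.

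\textbf{Gap 2: the energy estimate does not localize to deep modes.} You yourself flag this as ``the hard part'' and do not resolve it. The machinery of Theorem~\ref{energy} together with Lemma~\ref{firstlemma} is calibrated to sources of degree~$\le 1$: that is exactly what makes $|V[(X+\lambda V+A_s+\Phi)v]|^{2}-|(X+\lambda V+A_s+\Phi)v|^{2}\le 0$. For $\mu\in\Omega_{-m}$ with $m\ge 2$ this inequality fails, and the $s$-shift in $A_s$ only contributes a term weighted by $|k|$ over \emph{all} negative modes of the tail, not specifically the modes $k\le -m$; there is no mechanism in the identity that isolates the deep tail from the shallow one. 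Your sketch does not supply the missing idea, and in fact the paper's shift-by-$h\in\Omega_m$ is precisely the device that replaces such a localization.
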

\begin{proof} Choose an arbitrary nonvanishing function \(h\in\Omega_{m}\) and define a degree \(1\) function on \(SM\) via: \[A:=-h^{-1}(X+\lambda V)h.\] Now, note that \(hf\) is holomorphic, and \(hu\) solves: \[(X+\lambda V+A)(hu)=-hf,\ \ \ \ hu|_{\partial(SM)}=0.\] Now by Theorem~\ref{thetheorem} there exists a holomorphic integrating factor \(\omega\in C^{\infty}(SM)\) for \(A\), that is, \((X+\lambda V)\omega=A.\) Furthermore, \[(X+\lambda V)(e^{\omega}hu)=-e^{\omega}hf,\ \ \ \ e^{\omega}hu|_{\partial(SM)}=0.\] The right hand side of the above equation is holomorphic, so by Proposition~\ref{solutionhol} we know that \(e^{\omega}hu\) is holomorphic, and \((e^{\omega}hu)_{0}=0.\) Therefore, \(hu=e^{-\omega}e^{\omega}hu\) is holomorphic and \((hu)_{0}=0.\) Thus, \(u_{k}=0\) for \(k\leq-m.\)
\end{proof}

Again, the proof of the next result is analogous to the one above.
\begin{proposition} Let \((M,g,\alpha)\) be a simple magnetic system with \(\operatorname{dim}(M)=2.\) Suppose that \(u\in C^{\infty}(SM)\) satisfies \[(X+\lambda V)u=-f,\ \ \ u|_{\partial(SM)}=0.\] If \(m\geq 0\) and if \(f\in C^{\infty}(SM)\) is such that \(f_{k}=0\) for \(k\geq m+1\), then \(u_{k}=0\) for \(k\geq m\).\label{oncemore}
\end{proposition}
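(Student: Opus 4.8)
The plan is to mirror the proof of Proposition~\ref{more}, interchanging the roles of holomorphic and antiholomorphic objects throughout. First I would fix a nowhere-vanishing function \(h\in\Omega_{-m}\) (its existence being guaranteed exactly as in the proof of Proposition~\ref{more}) and set \(A:=-h^{-1}(X+\lambda V)h\), a function of degree \(1\) on \(SM\). Multiplication by \(h\) lowers every Fourier mode by \(m\), so the hypothesis \(f_{k}=0\) for \(k\geq m+1\) forces \(hf\) to be antiholomorphic. Moreover \(hu\) then solves \((X+\lambda V+A)(hu)=-hf\) with \(hu|_{\partial(SM)}=0\).

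Next I would invoke the antiholomorphic analogue of Theorem~\ref{thetheorem} to produce an antiholomorphic integrating factor \(\omega\in C^{\infty}(SM)\) with \((X+\lambda V)\omega=A\). Conjugating by \(e^{\omega}\) turns the equation into \((X+\lambda V)(e^{\omega}hu)=-e^{\omega}hf\), still with vanishing boundary data, and now with an antiholomorphic right-hand side (a product of antiholomorphic functions). Applying the antiholomorphic counterpart of Proposition~\ref{solutionhol} then shows \(e^{\omega}hu\) is antiholomorphic with vanishing zeroth Fourier mode, i.e. supported in modes \(k\leq -1\). Since \(e^{-\omega}\) is itself antiholomorphic, the product \(hu=e^{-\omega}(e^{\omega}hu)\) is again antiholomorphic and supported in modes \(k\leq -1\), so \((hu)_{0}=0\). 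Translating back through the degree shift induced by \(h\in\Omega_{-m}\) gives \(u_{k}=0\) for all \(k\geq m\), as desired.

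I expect no genuine analytic obstacle, since all the hard estimates have already been absorbed into Theorem~\ref{mainresult} (via Proposition~\ref{solutionhol}) and into the integrating-factor construction of Theorem~\ref{thetheorem}. The only points requiring care are bookkeeping: checking that multiplication by \(h\in\Omega_{-m}\) shifts Fourier degrees exactly by \(-m\), so that the support condition on \(f\) becomes antiholomorphicity and the support condition on \(hu\) translates back to the claimed range for \(u\); verifying that products and exponentials of antiholomorphic functions remain antiholomorphic; and confirming that \(e^{\pm\omega}\) preserve both the boundary condition and antiholomorphicity. These are all routine, and the main conceptual step is simply recognizing that the entire argument for Proposition~\ref{more} dualizes under the substitution of antiholomorphic for holomorphic data.
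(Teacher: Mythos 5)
Your proposal is correct and is exactly the argument the paper intends: the paper simply states that the proof is analogous to that of Proposition~\ref{more}, and your version carries out that dualization (nonvanishing \(h\in\Omega_{-m}\), antiholomorphic integrating factor, antiholomorphic analogue of Proposition~\ref{solutionhol}) with the degree bookkeeping done correctly.
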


\begin{remark} It should be understood that the above method implies that the hypotheses that our connection is unitary and Higgs field is skew-Hermitian in Theorem~\ref{mainresult} are superfluous when \(n=1\). The theorem holds for an arbitrary connection and arbitrary Higgs field. Indeed, suppose one has \(u\in C^{\infty}(SM,\mathbb{C})\) with \(u|_{\partial(SM)}=0\),  such that 
\[(X+\lambda V+A'+\Phi)u=-F-\sigma\]
where \(F\in C^{\infty}(M)\) and \(\sigma\) is a 1-form. Simply define \(A'':=-h^{-1}(X+\lambda V)h\) for an arbitrary nonvanishing \(h\in\Omega_{1}.\) Now run the proofs of Proposition~\ref{more} and Proposition~\ref{oncemore} using \(A''+A'+\Phi\) in place of \(A\) above. This yields that \(u\) is both holomorphic and antiholomorphic, and our theorem follows.
\end{remark}

These results immediately yield the following theorem which will turn out to be the key ingredient in the resolution of the tensor tomography question. 
\begin{theorem} Let \((M,g,\alpha)\) be a simple magnetic system with \(\operatorname{dim}(M)=2.\) Suppose that \(u\in C^{\infty}(SM)\) satisfies \[(X+\lambda V)u=-f,\ \ \ u|_{\partial(SM)}=0.\] If \(f\in C^{\infty}(SM)\) has degree \(m\geq 1\), then \(u\) has degree \(m-1\). If \(f\) has degree \(0\), then \(u=0\).\label{degreedowntheorem}
\end{theorem}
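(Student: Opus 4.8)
The plan is to obtain the result as an immediate consequence of Proposition~\ref{more} and Proposition~\ref{oncemore}, by invoking both of them at once and intersecting the resulting vanishing conditions on the Fourier modes of \(u\). No new machinery should be required; the work is entirely in matching up indices.

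First I would unwind the definition of degree. Saying that \(f\) has degree \(m\) means precisely that \(f_k = 0\) for all \(|k| \geq m+1\); equivalently, that \(f_k = 0\) for \(k \geq m+1\) \emph{and} \(f_k = 0\) for \(k \leq -m-1\). These are exactly the two hypotheses appearing in Proposition~\ref{oncemore} and Proposition~\ref{more} respectively, taken with the same value of \(m\). Since \(u \in C^{\infty}(SM)\) with \(u|_{\partial(SM)}=0\) and \((X+\lambda V)u = -f\), both propositions apply verbatim.

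Next I would apply them. From Proposition~\ref{oncemore}, the condition \(f_k = 0\) for \(k \geq m+1\) yields \(u_k = 0\) for \(k \geq m\); from Proposition~\ref{more}, the condition \(f_k = 0\) for \(k \leq -m-1\) yields \(u_k = 0\) for \(k \leq -m\). Combining the two conclusions gives \(u_k = 0\) for every \(|k| \geq m\), which is precisely the assertion that \(u\) has degree \(m-1\). This settles the case \(m \geq 1\), where the two ranges \(k \geq m\) and \(k \leq -m\) are disjoint and together leave only the modes \(|k| \leq m-1\) possibly nonzero.

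Finally, for the degenerate case \(m = 0\) (so that \(f = f_0\) is a function on \(M\)) I would run the identical argument with \(m = 0\): Proposition~\ref{more} then gives \(u_k = 0\) for \(k \leq 0\) and Proposition~\ref{oncemore} gives \(u_k = 0\) for \(k \geq 0\), the two ranges now overlapping at \(k=0\) and jointly exhausting all of \(\mathbb{Z}\), so that \(u = 0\). There is essentially no obstacle beyond this index bookkeeping; the only points meriting attention are to confirm that the smoothness hypothesis is exactly what licenses the two propositions, and to verify that the ranges \(k \geq m\) and \(k \leq -m\) glue to the full complement \(|k| \geq m\) without leaving a gap.
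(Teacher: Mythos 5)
Your proposal is correct and is precisely the argument the paper intends: the theorem is stated as an immediate consequence of Proposition~\ref{more} and Proposition~\ref{oncemore}, and your index bookkeeping (intersecting $u_k=0$ for $k\geq m$ with $u_k=0$ for $k\leq -m$ to get degree $m-1$, and noting the overlap at $k=0$ forces $u=0$ when $m=0$) fills in exactly the details the paper leaves unwritten. No issues.
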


Note that (\ref{onelasttime}) may be written \[(X+\lambda V)u = \text{sum} 
\begin{pmatrix}
  \hat{d^{s}} & 0 & \cdots & 0 \\
  \lambda V(\hat{\cdot}) & \hat{d^{s}} & \cdots & 0 \\
  \vdots  & \vdots  & \ddots & \vdots  \\
  0 & \cdots & \lambda V(\hat{\cdot}) & \hat{d^{s}}
\end{pmatrix}
\begin{pmatrix}
  u_{k-1}    \\
  u_{k-2}    \\
  \vdots    \\
  u_{0} 
\end{pmatrix}\] where \(u=\hat{u}_{k-1}+...+\hat{u}_{0}\), and \(u_{i}\) is a symmetric i-tensor. The notation is meant to indicate that we sum over all the entries in the matrix after the matrix multiplication is performed. The operator \(\hat{d^{s}}\) acts on \(u_{i}\) by taking the symmetric derivative, then associating to the resulting tensor a function on \(SM.\) 
 
\begin{proof}[Proof of Theorem~\ref{tensortomography}] Suppose \(I(\sum_{i=0}^{k}\hat{f}_{i})=0,\) and define \[u(x,v):=\int^{\tau(x,v)}_{0}(\sum_{i=0}^{k}\hat{f}_{i})(\varphi_{t}(x,v))dt,\ \ \ (x,v)\in SM.\] Then, \(u|_{\partial(SM)}=0,\) and by Proposition~\ref{theproposition} we know that \(u\in C^{\infty}(SM).\) Since \(\sum_{i=0}^{k}\hat{f}_{i}\) has degree \(k\), by Theorem~\ref{degreedowntheorem} we conclude that \(u\) has degree \(k-1\). Now decompose \(u\) into its Fourier components, and to the components in \(\Omega_{i}\) and \(\Omega_{-i}\) associate a symmetric i-tensor, denoted by \(u_{i}\). Note there is a bijective correspondence between certain smooth functions on \(M\) and symmetric i-tensors which makes this possible, the explicit mapping is described in \cite{paternain211}. This yields our result.
\end{proof}

\begin{remark} We also add that it is immediate that if we are given \(f_{i}\) symmetric i-tensor fields on \(M\) for \(1\leq i\leq k,\) such that 
\[(\sum_{i=0}^{k}\hat{f}_{i}) = (X+\lambda V)u\]
for \(u\in C^{\infty}(SM)\) with \(u|_{\partial(SM)}=0\), then \(I(\sum_{i=0}^{k}\hat{f}_{i})=0\) because
\[I(\sum_{i=0}^{k}\hat{f}_{i})=\int^{\tau(x,v)}_{0}(X+\lambda V)(u)(\varphi_{t}(x,v))dt\] 
which vanishes since we have that \(u|_{\partial{SM}}=0\).
\end{remark}

\newpage

\end{document}